\DeclareSymbolFont{sfoperators}{OT1}{ptm}{m}{n}
\DeclareSymbolFontAlphabet{\mathsf}{sfoperators}
\def\operator@font{\mathgroup\symsfoperators}
\numberwithin{equation}{section}
\newtheorem{thm}{Theorem}[section]
\newtheorem{defn}[thm]{Definition}
\newtheorem{lem}[thm]{Lemma}
\newtheorem{prop}[thm]{Proposition}
\newtheorem{assumption}[thm]{Assumption}
\def\th@newremark{\th@remark\thm@headfont{\bfseries}}
\def\bdiamond{\mathop{\mathpalette\bdi@mond\relax}}
\newcommand\bdi@mond[2]{%
	\vcenter{\hbox{\m@th
			\scalebox{\ifx#1\displaystyle 2.6\else1.8\fi}{$#1\diamond$}%
	}}%
}
\def\bDiamond{\mathop{\mathpalette\bDi@mond\relax}}
\newcommand\bDi@mond[2]{%
	\vcenter{\hbox{\m@th
			\scalebox{\ifx#1\displaystyle 2.6\else1.2\fi}{$#1\Diamond$}%
	}}%
}
\theoremstyle{newremark}
\newtheorem{rmk}[thm]{Remark}
\newtheorem{eg}[thm]{Example}
\definecolor{darkgreen}{rgb}{0.1,0.7,0.1}
\definecolor{darkred}{rgb}{0.7,0.1,0.1}
\definecolor{darkblue}{rgb}{0,0,0.7}
\newcommand{\EE}{\mathbb{E}}     
\newcommand{\VV}{\mathbb{V}}
\newcommand{\aA}{\mathcal{A}}
\newcommand{\cC}{\mathcal{C}}
\newcommand{\hH}{\mathcal{H}}
\newcommand{\iI}{\mathcal{I}}
\newcommand{\kK}{\mathcal{K}}
\newcommand{\lL}{\mathcal{L}}
\newcommand{\nN}{\mathcal{N}}
\newcommand{\oO}{\mathcal{O}}
\newcommand{\sS}{\mathcal{S}}
\newcommand{\uU}{\mathcal{U}}
\newcommand{\fs}{\mathfrak{s}}
\newcommand{\fv}{\mathfrak{v}}
\newcommand{\cov}{{\operator@font cov}}
\newcommand{\var}{{\operator@font var}}
\newcommand{\corr}{{\operator@font corr}}
\newcommand{\diam}{{\operator@font diam}}
\newcommand{\Av}{{\operator@font Av}}
\newcommand{\trig}{{\operator@font trig}}
\newcommand{\Enh}{{\operator@font Enh}}
\newcommand{\EEnh}{\overline {\operator@font Enh}}
\newcommand{\lfl}{\left\lfloor }  
\newcommand{\rfl}{\right\rfloor} 
\newcommand{\E}{\mathbf{E}}
\newcommand{\N}{\mathbf{N}}
\newcommand{\R}{\mathbf{R}}
\newcommand{\X}{\mathbf{X}}
\newcommand{\Z}{\mathbf{Z}}
\renewcommand{\k}{\mathbf{k}}
\newcommand{\n}{\mathbf{n}}
\newcommand{\x}{\mathbf{x}}
\newcommand*\Bell{\ensuremath{\boldsymbol\ell}}
\newcommand*\Bbeta{\ensuremath{\boldsymbol\beta}}
\def\set{{\mathfrak{u}}}
\newcommand{\hF}{\widehat{F}}
\def\Clus{\mathscr{C}}
\newcommand{\eps}{\varepsilon}
\newcommand{\Ups}{\Upsilon}
\colorlet{symbols}{blue!90!black}
\colorlet{testcolor}{green!60!black}
\def\${|\!|\!|}
\def\drawx{\draw[-,solid] (-3pt,-3pt) -- (3pt,3pt);\draw[-,solid] (-3pt,3pt) -- (3pt,-3pt);}
\tikzset{
	root/.style={circle,fill=testcolor,inner sep=0pt, minimum size=2mm},
	dot/.style={circle,fill=black,inner sep=0pt, minimum size=1mm},
	edot/.style={circle,fill=black,inner sep=0pt, minimum size=1mm},
	odot/.style={circle,draw=black,inner sep=0pt, minimum size=1mm},
	var/.style={circle,fill=black!10,draw=black,inner sep=0pt, minimum size=
	2mm},
    svar/.style={circle,fill=black!10,draw=black,inner sep=0pt, minimum size=
	1.5mm},
    noise0/.style={rectangle,draw=symbols,fill=white,inner sep=0pt, minimum size=1.5mm},
    noise1/.style={circle,draw=symbols,fill=white,inner sep=0pt, minimum size=1.5mm},
    noise2/.style={circle,draw=symbols,fill=symbols,inner sep=0pt, minimum size=1.5mm},
	dotred/.style={circle,fill=symbols!50,inner sep=0pt, minimum size=2mm},
	generic/.style={semithick,shorten >=1pt,shorten <=1pt},
	ageneric/.style={semithick},
	dist/.style={ultra thick,draw=testcolor,shorten >=1pt,shorten <=1pt},
	testfcn/.style={ultra thick,testcolor,shorten >=1pt,shorten <=1pt,<-},
	testfcnx/.style={ultra thick,testcolor,shorten >=1pt,shorten <=1pt,<-,
		postaction={decorate,decoration={markings,mark=at position 0.6 with {\drawx}}}},
	kepsilon/.style={semithick,shorten >=1pt,shorten <=1pt,densely dashed,->},
	kprimex/.style={semithick,shorten >=1pt,shorten <=1pt,densely dashed,->,
		postaction={decorate,decoration={markings,mark=at position 0.4 with {\drawx}}}},
	kernel/.style={semithick,shorten >=1pt,shorten <=1pt,->},
	akernel/.style={semithick,->},
	multx/.style={shorten >=1pt,shorten <=1pt,
		postaction={decorate,decoration={markings,mark=at position 0.5 with {\drawx}}}},
	kernelx/.style={semithick,shorten >=1pt,shorten <=1pt,->,
		postaction={decorate,decoration={markings,mark=at position 0.4 with {\drawx}}}},
	kernel1/.style={->,semithick,shorten >=1pt,shorten <=1pt,postaction={decorate,decoration={markings,mark=at position 0.45 with {\draw[-] (0,-0.1) -- (0,0.1);}}}},
	kernel2/.style={->,semithick,shorten >=1pt,shorten <=1pt,postaction={decorate,decoration={markings,mark=at position 0.45 with {\draw[-] (0.05,-0.1) -- (0.05,0.1);\draw[-] (-0.05,-0.1) -- (-0.05,0.1);}}}},
	kernelBig/.style={semithick,shorten >=1pt,shorten <=1pt,decorate, decoration={zigzag,amplitude=1.5pt,segment length = 3pt,pre length=2pt,post length=2pt}},
	gepsilon/.style={dotted,semithick,shorten >=1pt,shorten <=1pt},
	renorm/.style={shape=circle,fill=white,inner sep=1pt},
	labl/.style={shape=rectangle,fill=white,inner sep=1pt},
	xi/.style={circle,fill=symbols!10,draw=symbols,inner sep=0pt,minimum size=1.2mm},
	xix/.style={crosscircle,fill=symbols!10,draw=symbols,inner sep=0pt,minimum size=1.2mm},
	xib/.style={circle,fill=symbols!10,draw=symbols,inner sep=0pt,minimum size=1.6mm},
	xibx/.style={crosscircle,fill=symbols!10,draw=symbols,inner sep=0pt,minimum size=1.6mm},
	not/.style={circle,fill=symbols,draw=symbols,inner sep=0pt,minimum size=0.5mm},
	>=stealth,
  	highlight/.style={line width=7pt,blue,draw opacity=0.2,line cap=round,line join=round},
  	cover/.style={line width=7pt,blue,line cap=round,line join=round},
	smalldot/.style={circle,fill=symbols,draw=symbols, solid,inner sep=0pt,minimum size=0.5mm},
	}
\def\DeclareSymbol#1#2#3{\expandafter\gdef\csname MH@symb@#1\endcsname{\tikz[baseline=#2,scale=0.15,draw=symbols]{#3}}\expandafter\gdef\csname MH@symb@#1s\endcsname{\scalebox{0.5}{\tikz[baseline=#2,scale=0.15,draw=symbols]{#3}}}}
\def\<#1>{\csname MH@symb@#1\endcsname}
\setlist[itemize]{topsep=3pt,itemsep=1.5pt,parsep=0pt}
\def\scal#1{\langle#1\rangle}
\def\cent#1{\mathopen{{\langle\kern-0.3em\rangle}}#1\mathclose{{\langle\kern-0.3em\rangle}}}
\def\d{\partial}
\begin{document}

\title{Sharp convergence of nonlinear functionals of a class of Gaussian random fields}
\author{Weijun Xu}
\institute{University of Oxford, UK {/} New York University Shanghai, China.\\
	\email{weijunx@gmail.com}}

\maketitle

\begin{abstract}
	We present a self-contained proof of a uniform bound on multi-point correlations of trigonometric functions of a class of Gaussian random fields. It corresponds to a special case of the general situation considered in \cite{KPZ_general}, but with improved estimates. As a consequence, we establish convergence of a class of Gaussian fields composite with more general functions. These bounds and convergences are useful ingredients to establish weak universalities of several singular stochastic PDEs. 
\end{abstract}

\setcounter{tocdepth}{2}
\microtypesetup{protrusion=false}
\tableofcontents
\microtypesetup{protrusion=true}

\def\k{\mathbf{k}}

\section{Introduction}

\subsection{Motivation from weak universalities}

\label{sec:weak_universality}

The study of singular stochastic PDEs has received much attention recently, and powerful theories are being developed to enhance the general understanding of this area. We refer to the excellent surveys \cite{Martin_renorm_survey} and \cite{Gubinelli_ICM} and references therein for recent breakthroughs in the field. 

One of the motivations to study singular SPDEs is that many of them are expected to be universal objects in crossover regimes of their respective universality classes, a phenomenon known as \textit{weak universality}. One well-known example is the KPZ equation (\cite{KPZ86}), formally given by
\begin{equation*}
\d_t h = \d_x^2 h + \lambda (\d_x h)^2 + \xi, 
\end{equation*}
where $\xi$ is the one dimensional space-time white noise. The equation is only formal since it involves the square of a distribution. Nevertheless, the solution can be rigorously constructed in a few different ways, including the Cole-Hopf transform (\cite{BG97}), pathwise solutions via rough paths / regularity structures (\cite{HairerKPZ, rs_theory, RP_book}) or paracontrolled distributions (\cite{Reloaded}), or the notion of energy solution through a martingale problem (\cite{Energy, Energy_unique}). 

The KPZ equation is expected to be the universal model for weakly asymmetric interface growth at large scales. In \cite{HQ}, the authors considered continuous microscopic models of the type
\begin{equation} \label{eq:KPZ_micro}
\d_t \tilde{h} = \d_x^2 \tilde{h} + \sqrt{\eps} F(\d_x \tilde{h}) + \tilde{\xi}
\end{equation}
for any even polynomial $F$ and smooth stationary Gaussian random field $\tilde{\xi}$. The main result in \cite{HQ} is that there exists $C_{\eps} \rightarrow +\infty$ such that the rescaled and re-centered height function
\begin{equation*}
h_{\eps}(t,x) := \eps^{\frac{1}{2}} \tilde{h}(t/\eps^2, x/\eps) - C_{\eps}t
\end{equation*}
converges to the solution of the KPZ equation with
\begin{equation} \label{eq:lambda}
\lambda = \frac{1}{2} \E F''(\tilde{\Psi}), 
\end{equation}
where $\tilde{\Psi} = \d_x P * \tilde{\xi}$ and $P$ is the heat kernel. \cite{KPZ_general} extended the result to arbitrary even functions $F$ with sufficient regularity and polynomial growth. Similar results have also been obtained in \cite{HQ_stationary} for models at stationarity. 

To see why the convergence holds with $\lambda$ given by \eqref{eq:lambda}, we write down the equation for $h_{\eps}$: 
\begin{equation*}
\d_t h_{\eps} = \d_x^2 h_{\eps} + \eps^{-1} F(\sqrt{\eps} \d_x h_\eps) + \xi_{\eps} - C_{\eps}, 
\end{equation*}
where $\xi_{\eps}(t,x) = \eps^{-\frac{3}{2}} \tilde{\xi}(t/\eps^2, x/\eps)$ approximates the space-time white noise $\xi$ at scale $\eps$. Let $\Psi_\eps = \d_x P * \xi_\eps$, then $\sqrt{\eps} \Psi_\eps$ is stationary Gaussian with finite variance. In addition, by analogy with the standard KPZ equation, it is reasonable to expect that the remainder $\d_x u_\eps =\d_x h_\eps - \Psi_\eps$ is almost bounded. Hence one can Taylor expand the nonlinearity $\eps^{-1} F(\sqrt{\eps} \d_x h_\eps)$ around $\sqrt{\eps} \Psi_\eps$, and formally get
\begin{equation*}
\begin{split}
\eps^{-1}F(\sqrt{\eps} \Psi_\eps) &- C_\eps =\big( \eps^{-1}F(\sqrt{\eps} \Psi_\eps) - C_{\eps} \big)\\
&+ \eps^{-\frac{1}{2}} F'(\sqrt{\eps} \Psi_\eps) \cdot (\d_x u_\eps) + F''(\sqrt{\eps} \Psi_\eps) \cdot (\d_x u_\eps)^{2} + \oO(\eps^{\frac{1}{2}-}). 
\end{split}
\end{equation*}
One then needs to show the convergence of the objects $\eps^{-1} F(\sqrt{\eps} \Psi_\eps) - C_\eps$, $\eps^{-\frac{1}{2}} F'(\sqrt{\eps} \Psi_\eps)$, $F''(\sqrt{\eps} \Psi_\eps)$ as well as their products, which arise from the local expansion of $\d_x u_\eps$. 

At least formally, by chaos expanding $\eps^{-1}F(\sqrt{\eps} \Psi_\eps)$ and taking $C_\eps = \eps^{-1} \E F(\sqrt{\eps} \Psi_\eps)$, one can see that
\begin{equation} \label{eq:converge_eg}
\eps^{-1} F(\sqrt{\eps} \Psi_\eps) - C_{\eps} \rightarrow \lambda \Psi^{\diamond 2}
\end{equation}
where $\lambda$ is given in \eqref{eq:lambda} and $\Psi = \d_x P * \xi$ is the limit of $\Psi_\eps$. This is because all terms starting from the $4$-th chaos vanish termwise as $\eps \rightarrow 0$, and only the second chaos component survives in the limit. 

When $F$ is even polynomial, this heuristic indeed gives a direct proof of the convergence of the term in \eqref{eq:converge_eg}. However, when $F$ is not polynomial, the actual proof of the convergence becomes much subtler. The main obstacle is that $F(\sqrt{\eps} \Psi_\eps)$ expands into an \textit{infinite} chaos series. If we brutally control their high moments termwise as in the polynomial case, then in order for these termwise moment bounds to be summable, we need to impose very strong conditions on $F$ (namely its Fourier transform being compactly supported), which is clearly too restrictive. 

Instead, in \cite{KPZ_general}, the authors expanded $F(\sqrt{\eps} \Psi_\eps)$ in terms of Fourier transform, developed a procedure in obtaining pointwise correlation bounds on trigonometric functions of Gaussians, and deduced the desired convergence from those bounds. 

Similar universality results are also present in the dynamical $\Phi^4_3$ model. The weak universality of $\Phi^4_3$ equation for a large class of symmetric phase coexistence models with polynomial potential was established in \cite{Phi4_poly} for Gaussian noise and then extended in \cite{Phi4_non_Gaussian} to non-Gaussian noise. The extension beyond polynomial potential (even with Gaussian noise) has the same difficulties as in the KPZ case discussed above. In the recent work \cite{Phi4_general}, the authors developed different methods based on Malliavin calculus to control similar objects. The methods developed in \cite{Phi4_general} and \cite{KPZ_general} to treat general nonlinearities are both robust enough to cover both KPZ and $\Phi^4_3$ equations as well as other similar situations. 

In this article, we follow the ideas developed in \cite{KPZ_general}, and prove a uniform bound in a special case considered in there. This special case is technically simpler to explain, but is also illustrative enough to reveal the main idea of the proof for the more general case. Furthermore, we obtain a better bound in this special case, thus yielding convergence results for functions $F$ with lower regularity. 

\subsection{Main statements}

Fix a scaling $\fs = (s_1, \dots, s_d)$ on $\R^d$, and let $|\fs| = \sum_{j} s_j$. The metric induced by $\fs$ is
\begin{equation*}
	|x|_{\fs} := |x_1|^{\frac{1}{s_1}} + \dots + |x_d|^{\frac{1}{s_d}}. 
\end{equation*}
Since the scaling is fixed throughout the article, we simply write $|x|$ instead of $|x|_{\fs}$. For any Gaussian random field $X$, any function $F: \R \rightarrow \R$ with at most exponential growth and any integer $m \geq 0$, we write
\begin{equation*}
\hH_{m}\big(F(X)\big) = \sum_{n \geq m} C_{n} X^{\diamond n}, 
\end{equation*}
where $X^{\diamond n}$ denotes the $n$-th Wick power of $X$, and $C_{n} = \frac{1}{n!} \E F^{(n)}(X)$ is the coefficient of $X^{\diamond n}$ in the chaos expansion of $F(X)$. In other words, $\hH_{m}\big(F(X)\big)$ is $F(X)$ with the first $m-1$ chaos removed. We refer to \cite[Chapter~1]{Nua06} for more details on chaos expansion of random variables. We have the following bound. 

\begin{thm} \label{th:main}
	Let $\alpha \in (0,|\fs|)$, and $\{\Phi_\eps\}_{\eps \in (0,1)}$ be a class of centered Gaussian random fields satisfying
	\begin{equation} \label{eq:field_eps}
	\frac{\eps^{\alpha}}{\Lambda \big(|x-y|+\eps\big)^{\alpha}} \leq \E\big( \Phi_\eps(x) \Phi_\eps(y) \big) \leq \frac{\Lambda \eps^{\alpha}}{\big(|x-y|+\eps\big)^{\alpha}}
	\end{equation}
	for some $\Lambda > 1$ and for all $x,y \in \R^d$ and all $\eps \in (0,1)$. Then, for every $K \geq 1$ and $m, r \in \N$, there exists $C>0$ depending on these parameters and $\Lambda$ only such that
	\begin{equation} \label{eq:main}
	\bigg| \E \prod_{j=1}^{K} \d_{\theta}^{r} \hH_{m}\big( e^{i \theta \Phi_\eps(x_j)} \big) \bigg| \leq C \E \prod_{j=1}^{K} \Big( \Phi_{\eps}^{\diamond m}(x_j) + \Phi_{\eps}^{\diamond (m+1)}(x_j) \Big)
	\end{equation}
	for all $\eps \in (0,1)$, $\theta \in \R$ and $\x = (x_k)_{k=1}^{K}$. 
\end{thm}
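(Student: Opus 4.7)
The plan is to derive an integral representation for $\hH_m(e^{i\theta\Phi_\eps(x)})$ that exhibits $\Phi_\eps^{\diamond m}(x)$ as an explicit Wick factor, plug it into the $K$-fold product, and evaluate the resulting multi-point Gaussian expectation via its characteristic function. Write $\sigma_\eps^2(x) := \E\Phi_\eps(x)^2 \in [1/\Lambda,\Lambda]$ (the range coming from \eqref{eq:field_eps}) and $C_\eps(x,y) := \E\Phi_\eps(x)\Phi_\eps(y)$.

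First I would reduce to the case $r=0$. Since $\hH_m$ is linear and $\d_\theta$ commutes with it, $\d_\theta^r\hH_m(e^{i\theta\Phi_\eps(x_j)}) = \hH_m\big((i\Phi_\eps(x_j))^r e^{i\theta\Phi_\eps(x_j)}\big)$. Expanding $\Phi_\eps(x_j)^r$ as a Wick polynomial in $\Phi_\eps(x_j)$ and appealing to the Wick shift identity
\[
\Phi_\eps(x)^{\diamond s}\diamond e^{i\lambda\Phi_\eps(x)} = \big(\Phi_\eps(x) - i\lambda\sigma_\eps^2(x)\big)^{\diamond s}\cdot e^{i\lambda\Phi_\eps(x)},
\]
proved by induction from the basic rule $\Phi\diamond F = \Phi F - \sigma^2 F'$, reduces the problem to finitely many $r = 0$ instances with $m$ shifted by at most $r$. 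Next, combining the integral Taylor remainder $\sum_{n\geq m}z^n/n! = \frac{z^m}{(m-1)!}\int_0^1(1-t)^{m-1}e^{tz}\,dt$ with the Wick chaos expansion and the shift identity yields the key representation
\[
\hH_m(e^{i\theta\Phi_\eps(x)}) = \frac{(i\theta)^m}{(m-1)!}\int_0^1 (1-t)^{m-1}\, e^{-\theta^2\sigma_\eps^2(x)(1-t^2)/2}\,\big(\Phi_\eps(x) - it\theta\sigma_\eps^2(x)\big)^{\diamond m}\, e^{it\theta\Phi_\eps(x)}\,dt.
\]

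Substituting this into $\E\prod_j\hH_m(e^{i\theta\Phi_\eps(x_j)})$ gives an integral over $(t_1,\ldots,t_K)\in [0,1]^K$ of the expectation $\E\big[\prod_j(\Phi_\eps(x_j) - it_j\theta\sigma_\eps^2(x_j))^{\diamond m}\cdot e^{i\theta\sum_j t_j\Phi_\eps(x_j)}\big]$. Using the generating function $\sum_k \eta^k W^{\diamond k}/k! = e^{\eta W - \eta^2\sigma^2/2}$ in each coordinate, this inner expectation evaluates explicitly; after cancellation of the diagonal $\eta_j^2\sigma_\eps^2(x_j)$ terms it equals $m!^K$ times the Gaussian factor $\exp\big(-\tfrac{\theta^2}{2}\sum_{jk}t_j t_k C_\eps(x_j,x_k)\big)$ times a finite sum, indexed by $(\ell_{jk},a_j)$ with $a_j + \sum_{k\neq j}\ell_{jk} = m$ at every vertex $j$, of products $\prod_{j<k}C_\eps(x_j,x_k)^{\ell_{jk}}/\ell_{jk}!$ weighted by $\prod_j (i\theta\sum_{k\neq j}t_k C_\eps(x_j,x_k))^{a_j}/a_j!$.

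The main obstacle is to bound this expression uniformly in $\theta\in\R$. Combining all the Gaussian factors, the total quadratic exponent works out to $-\tfrac{\theta^2}{2}\big[\sum_j\sigma_\eps^2(x_j) + \sum_{j\neq k}t_jt_k C_\eps(x_j,x_k)\big]\leq -\tfrac{K\theta^2}{2\Lambda}$ uniformly in $(t_j)\in [0,1]^K$, thanks to $C_\eps\geq 0$ and $\sigma_\eps^2(x_j)\geq 1/\Lambda$. Crucially, one must retain the oscillatory characteristic factor rather than bound it by its modulus---a naive absolute-value estimate already produces a divergent $\exp(c\theta^2)$ when $K\geq 3$ and the points are strongly correlated. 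This genuine Gaussian decay absorbs the polynomial factor $(i\theta)^{mK+\sum_j a_j}$ of total $\theta$-degree $\leq 2mK$ (plus an extra $O(r)$ from the reduction step), while the weight $\prod_j(1-t_j)^{m-1}$ makes the $t$-integral converge. What remains is a finite sum bounded by $C_{m,K,r,\Lambda}\sum_{(\ell,a)}\prod_{j<k}C_\eps(x_j,x_k)^{\ell_{jk}}\prod_j\big(\sum_{k\neq j}C_\eps(x_j,x_k)\big)^{a_j}$ under the same degree-$m$ constraint; expanding the inner sums binomially, it runs over multigraphs on $\{1,\ldots,K\}$ with every vertex degree $\geq m$ and total edge count $\leq mK$. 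A final combinatorial step bounds this by a constant multiple of $\E\prod_j\big(\Phi_\eps^{\diamond m}(x_j) + \Phi_\eps^{\diamond (m+1)}(x_j)\big)$, whose Wick expansion produces exactly the degree-$\{m,m+1\}$ multigraph configurations, using $C_\eps\leq\Lambda$ to absorb any excess edges over the baseline. The $\Phi_\eps^{\diamond(m+1)}$ summand on the right-hand side is essential to obtain a non-vanishing bound when $mK$ is odd, since then no matching with all degrees equal to $m$ exists.
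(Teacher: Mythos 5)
Your approach is genuinely different from the paper's and in one respect cleaner.  The paper proceeds by clustering the points $\{x_j\}$ at a scale $L\eps$, splitting into the case of no singleton clusters (where a crude bound suffices) and the singleton case, where the factor $e^{-\theta^2/2\Lambda}$ in Lemma~\ref{le:coeff} is extracted combinatorially from the singleton vertices and then fed through a reduction/enhancement on multigraphs.  You instead extract a total Gaussian factor $\exp\bigl(-\tfrac{\theta^2}{2}\bigl[\sum_j\sigma^2_\eps(x_j)+\sum_{j\neq k}t_jt_kC_\eps(x_j,x_k)\bigr]\bigr)\le e^{-K\theta^2/2\Lambda}$ \emph{analytically}, for \emph{every} configuration of points, from the integral-remainder form of $\hH_m$ and the characteristic function; this makes the $L$-dependent clustering unnecessary and the $\theta$-uniformity completely transparent.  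The representation $\hH_m(e^{i\theta X})=\tfrac{(i\theta)^m}{(m-1)!}\int_0^1(1-t)^{m-1}e^{-\theta^2\sigma^2(1-t^2)/2}(X-it\theta\sigma^2)^{\diamond m}e^{it\theta X}\,dt$ is correct (I checked it against the chaos coefficients), and the cancellation of the $\eta_j^2$ and $\eta_jt_j$ diagonal terms you describe is also correct.

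However, the ``final combinatorial step'' is a genuine gap, and the mechanism you propose --- ``using $C_\eps\le\Lambda$ to absorb any excess edges'' --- is wrong.  Dropping edges from a multigraph with all degrees $\geq m$ does not in general produce a sub-multigraph with all degrees in $\{m,m+1\}$.  For a concrete counterexample with $K=4$, $m=2$: your expansion produces the star multigraph with $d_{12}=d_{13}=d_{14}=2$ and all other $d_{jk}=0$ (take $\ell_{12}=\ell_{13}=1$, $a_1=0$, $a_2=a_3=1$, $a_4=2$, then $b_{21}=b_{31}=1$, $b_{41}=2$).  It has $\deg(1)=6$, $\deg(2)=\deg(3)=\deg(4)=2$.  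Any sub-multigraph with $\deg(j)\geq 2$ for $j=2,3,4$ forces $d_{1j}=2$ for all $j$ and hence $\deg(1)=6\notin\{2,3\}$.  So you cannot reach a valid baseline by deleting edges alone.  What does work is to \emph{move} edges using the triangle-type inequality $\E(\Phi_\eps(x)\Phi_\eps(y))\E(\Phi_\eps(x)\Phi_\eps(z))\le 2^\alpha\Lambda^3\,\E(\Phi_\eps(y)\Phi_\eps(z))$ (the paper's \eqref{eq:corr_triangle}): whenever a vertex $j$ with $\deg(j)\geq m+2$ is connected to two distinct neighbours $i,i'$, replace one edge $\{j,i\}$ and one edge $\{j,i'\}$ by an edge $\{i,i'\}$; this lowers $\deg(j)$ by $2$ while preserving $\deg(i),\deg(i')$, and hence preserves the invariant $\deg\geq m$ at every vertex.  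Iterating, together with the trivial deletion when $j$ is connected to a single vertex, terminates at a multigraph with all degrees in $\{m,m+1\}$, at the cost of a bounded constant since the total degree is at most $2mK$.  This is precisely the content of the paper's Proposition~\ref{pr:reduction}, so your proof cannot avoid a close analogue of it; it only avoids the enhancement step (Section~\ref{sec:enhancement}), because your graphs already have degree $\geq m$ at every vertex, whereas the paper's reduced graphs can have vertices of degree $0$ inside clusters.

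Two smaller points.  First, your reduction of $r>0$ to $r=0$ via the Wick shift is under-justified: $\hH_m\bigl((i\Phi)^re^{i\theta\Phi}\bigr)$ does not literally decompose into a sum of $\hH_{m'}(e^{i\theta\Phi})$'s with shifted $m'$.  The cleaner route is to simply apply $\d_\theta^r$ directly to your integral representation; the extra polynomial factors of $\theta$, $t$, $\Phi$ it produces are harmless given the Gaussian decay, and this is also closer to how the paper handles $r$ inside Lemma~\ref{le:coeff}.  Second, the formula is valid only for $m\geq 1$ (the $(m-1)!$ and the $(1-t)^{m-1}$ weight both assume this); for $m=0$ the statement is trivial since $|\E\prod_je^{i\theta\Phi_\eps(x_j)}|\le 1\le\E\prod_j(1+\Phi_\eps(x_j))$, but you should say so.
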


Theorem~\ref{th:main} is the main technical ingredient to establish that if $\{\Psi_\eps\}$ approximates a certain Gaussian random field $\Psi$, then a large class of nonlinear functions of $\Psi_\eps$, after proper rescaling and re-centering, converges to certain Wick powers of $\Psi$. We first give the assumption on the random field $\Psi$. 

\begin{assumption} \label{as:field}
	$\Psi$ is a stationary Gaussian random field with correlation\footnote{A rigorous way of saying the correlation is $G$ is that
	\begin{equation*}
	\E \scal{\Psi, \varphi} \scal{\Psi, \phi} = \int_{\R^d} G(x-y) \varphi(x) \phi(y) dx dy
	\end{equation*}
    for all $\varphi, \phi \in \cC_{c}^{\infty}(\R^d)$.}
	\begin{equation*}
	\E \big( \Psi(x) \Psi(y) \big) = G(x-y), 
	\end{equation*}
	where $G$ satisfies the bounds
	\begin{equation*}
	\frac{c}{|x|^\alpha} \leq G(x) \leq \frac{C}{|x|^\alpha}\  \quad \text{and} \quad 
	|(\d_j G)(x)| \leq C |x|^{-\alpha-s_j}
	\end{equation*}
	for some $\alpha \in (0,|\fs|)$ and all $x \in \R^d$. In addition, there exists a locally integrable function $g$ such that
	\begin{equation} \label{eq:G_converge}
	\eps^{\alpha} G(\eps^{s_1} x_1, \dots, \eps^{s_d}x_d) \rightarrow g(x)
	\end{equation}
	in $L^{1}(\Omega)$ for every bounded subset $\Omega$ of $\R^d$. 
\end{assumption}

For $M \in \N$ and open subset $\iI \subset \R$, we define the norm $\|\cdot\|_{M,\iI}$ on distributions on $\R$ by
\begin{equation*}
\|\Ups\|_{M,\iI} := \sup_{0 \leq r \leq M} \sup_{\stackrel{\varphi \in \cC_{c}^{M}(\iI):}{\|\varphi\|_{\cC^{M}(\iI)} \leq 1}} |\scal{\Ups, \varphi^{(r)}}|. 
\end{equation*}
Our assumption on the function $F: \R \rightarrow \R$ is the following. 

\begin{assumption} \label{as:F}
	There exists $M \in \N$ such that the Fourier transform of $F$ satisfies
	\begin{equation*}
	\sum_{k \in \Z} \|\hF\|_{M,\iI_k} < +\infty, 
	\end{equation*}
	where $\iI_k = (k-1,k+1)$. 
\end{assumption}

For every $\rho \in \cC_{c}^{\infty}(\R^d)$ and $\eps>0$, let
\begin{equation*}
\rho_{\eps}(x) = \eps^{-|\fs|} \rho(x_{1}/\eps^{s_1}, \dots, x_{d}/\eps^{s_d}). 
\end{equation*}
The main convergence theorem is the following. 

\begin{thm} \label{th:convergence}
	Let $\Psi$ and $F$ satisfy the above assumptions, and $g$ be the limiting $L^1$ function of $\eps^\alpha G(\eps \cdot)$ as in Assumption~\ref{as:field}. Let $\rho$ be a mollifier on $\R^d$ and $\Psi_{\eps} = \Psi * \rho_{\eps}$. For every integer $m$, define
	\begin{equation} \label{eq:a_m}
	a_{m} := \frac{1}{m!} \big( F^{(m)}*\mu \big)(0), 
	\end{equation}
	where $\mu \sim \nN(0,\sigma^2)$ is a Gaussian measure on $\R$ with variance
	\begin{equation} \label{eq:sigma2}
	\sigma^2 = \int g(x-y) \rho(x) \rho(y) dx dy. 
	\end{equation}
	Then for every $m < \frac{|\fs|}{\alpha}$ and every sufficiently small $\kappa$, we have
	\begin{equation*}
	\eps^{-\frac{m \alpha}{2}} \hH_{m}\big(F(\eps^{\frac{\alpha}{2}} \Psi_{\eps})\big) \rightarrow a_{m} \Psi^{\diamond m}
	\end{equation*}
	as $\eps \rightarrow 0$ almost surely in $\cC^{-\frac{m\alpha}{2}-\kappa}$. Here, $\Psi^{\diamond m}$ is the $m$-th Wick power of $\Psi$. 
\end{thm}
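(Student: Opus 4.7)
\emph{Strategy.} The plan is to peel off the leading $m$-th chaos, which is handled by classical Gaussian arguments, and then control the remaining higher-chaos part by combining Theorem~\ref{th:main} with a Fourier expansion of $F$ and a Kolmogorov-type moment bound.

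\emph{Step~1: Isolating the leading chaos.} Let $C_m^\eps := \tfrac{1}{m!}\,\E F^{(m)}\big(\eps^{\alpha/2}\Psi_\eps(0)\big)$. The chaos expansion of $F(\eps^{\alpha/2}\Psi_\eps)$ yields the decomposition
\[
\eps^{-\frac{m\alpha}{2}}\hH_m\big(F(\eps^{\alpha/2}\Psi_\eps)\big) \;=\; C_m^\eps\,\Psi_\eps^{\diamond m} \;+\; \eps^{-\frac{m\alpha}{2}}\hH_{m+1}\big(F(\eps^{\alpha/2}\Psi_\eps)\big).
\]
Since the variance of $\eps^{\alpha/2}\Psi_\eps(0)$ tends to $\sigma^2$ by~\eqref{eq:G_converge}--\eqref{eq:sigma2}, we have $C_m^\eps \to a_m$. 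Under the hypothesis $m < |\fs|/\alpha$, the classical convergence $\Psi_\eps^{\diamond m} \to \Psi^{\diamond m}$ in $\cC^{-m\alpha/2-\kappa}$ a.s.\ is standard, so the first summand already delivers the target limit $a_m\Psi^{\diamond m}$. It therefore suffices to prove that $R_\eps := \eps^{-m\alpha/2}\hH_{m+1}(F(\eps^{\alpha/2}\Psi_\eps))$ converges to $0$ in $\cC^{-m\alpha/2-\kappa}$ almost surely.

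\emph{Step~2: Moment bound via Fourier inversion and Theorem~\ref{th:main}.} By the Kolmogorov criterion for negative H\"older--Besov regularity, it suffices to exhibit, for some $\delta > 0$ and all sufficiently large integers $K$, a bound of the form $\E|\scal{R_\eps,\psi^\lambda_z}|^{2K} \lesssim \eps^{\delta K}\lambda^{-mK\alpha - 2K\kappa}$ uniform in $\lambda \in (0,1)$, $z \in \R^d$ and rescaled test functions $\psi^\lambda_z$ at scale $\lambda$. Writing $F(y) = \int \hF(\theta)\,e^{i\theta y}\,d\theta$ as a distributional pairing,
\[
\scal{R_\eps,\psi^\lambda_z} \;=\; \eps^{-\frac{m\alpha}{2}}\int \hF(\theta)\, \scal{\hH_{m+1}\big(e^{i\theta\eps^{\alpha/2}\Psi_\eps}\big),\psi^\lambda_z}\,d\theta,
\]
and expanding the $2K$-th absolute power produces a $2K$-fold integral in $\boldsymbol\theta = (\theta_j)$. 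I would localise each $\theta_j$ via a smooth partition of unity subordinate to $\{\iI_k\}$, and integrate by parts up to $M$ times in every $\theta_j$; this transfers the derivatives onto the Wick-exponential correlation and produces prefactors $\prod_j \|\hF\|_{M,\iI_{k_j}}$, whose $\boldsymbol k$-sum is finite by Assumption~\ref{as:F}. The multi-frequency analogue of Theorem~\ref{th:main} (proved by the same argument, with $\theta$ replaced by $\theta_j$ inside the $j$-th factor) at level $m+1$ and with up to $M$ derivatives in each factor then yields, with $\Phi_\eps = \eps^{\alpha/2}\Psi_\eps$,
\[
\Big|\E\prod_{j=1}^{2K}\partial_{\theta_j}^{r_j}\hH_{m+1}\big(e^{i\theta_j\Phi_\eps(x_j)}\big)\Big| \;\leq\; C\,\E\prod_{j=1}^{2K}\Big(\Phi_\eps^{\diamond(m+1)}(x_j) + \Phi_\eps^{\diamond(m+2)}(x_j)\Big),
\]
uniformly in $\boldsymbol\theta$, for all $r_j \leq M$.

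\emph{Step~3: Wick estimate and conclusion; main obstacle.} The right-hand side above, integrated against $\prod_j \psi^\lambda_z(x_j)$, expands by Wick's theorem into a finite sum over perfect pairings of $2K(m+1)$ legs; each pair contributes $\eps^\alpha G_\eps(x_i-x_j)$, and by Assumption~\ref{as:field} the corresponding integrated factor is $\lesssim (\eps/\lambda)^\alpha$ for $\lambda \geq \eps$. Absorbing the $\eps^{-mK\alpha}$ prefactor from $R_\eps$, this gives
\[
\E\,|\scal{R_\eps,\psi^\lambda_z}|^{2K} \;\lesssim\; (\eps/\lambda)^{K\alpha}\,\lambda^{-mK\alpha},
\]
which is summably better than the threshold scaling for $\cC^{-m\alpha/2-\kappa}$ along any dyadic sequence of $\eps$; a Borel--Cantelli argument upgrades this to almost-sure convergence $R_\eps \to 0$. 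The only delicate point is the passage from the single-$\theta$ statement of Theorem~\ref{th:main} to the multi-frequency correlation arising from the Fourier expansion of $F$: the $\partial_\theta^r$-derivative form of Theorem~\ref{th:main}, together with Assumption~\ref{as:F}, is tailored precisely so that integration by parts in $\theta_j$ absorbs the $\theta$-dependence into summable $\|\hF\|_{M,\iI_k}$-factors while leaving a Wick-correlation bound that is independent of $\boldsymbol\theta$. Once this duality is set up, the rest is a standard Wick-pairing and Kolmogorov calculation.
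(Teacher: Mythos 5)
Your Step~1 matches the paper's decomposition (the paper's \eqref{eq:separate} and \eqref{eq:removal}), and your overall strategy---Fourier-expand $F$, localise $\theta$, reduce to a Wick-correlation bound, finish with a Kolmogorov argument---is the right one. The genuine gap is in Step~2, and it is not a small one.

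You expand $\E|\scal{R_\eps,\psi^\lambda_z}|^{2K}$ into a $2K$-fold $\theta$-integral \emph{before} applying any moment bound, which leaves you needing a multi-frequency estimate
\[
\Big|\E\prod_{j=1}^{2K}\d_{\theta_j}^{r_j}\hH_{m+1}\big(e^{i\theta_j\Phi_\eps(x_j)}\big)\Big| \leq C\,\E\prod_{j=1}^{2K}\big(\Phi_\eps^{\diamond(m+1)}(x_j)+\Phi_\eps^{\diamond(m+2)}(x_j)\big)
\]
\emph{uniformly in $\Btheta$}, and you assert it is ``proved by the same argument.'' This is not so, and the paper warns you of exactly this: Section~1.3 states explicitly that Theorem~\ref{th:main} is the single-frequency special case of \cite[Theorem~6.4]{KPZ_general}, and that the multi-frequency version is only \emph{polynomial} in $\theta$, not uniform. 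The single-$\theta$ structure is used essentially: in Lemma~\ref{le:coeff}, the bound~\eqref{eq:coeff_singleton} extracts a Gaussian factor $e^{-\theta^2/(2\Lambda)}$ only from singleton clusters, and in \eqref{eq:simplified_intermediate} this one Gaussian factor must overpower $\exp(C_0\scal{\theta}^2/L^\alpha)$ coming from summing the coefficients of \emph{all} clusters. That works because all factors share the same $\theta$. With independent $\theta_j$'s, the non-singleton clusters contribute $\scal{\theta_j}^{n_j}$ in frequencies that no singleton Gaussian factor controls, so the sum over chaos orders diverges in $\Btheta$; the uniform bound fails. Falling back on the polynomial-in-$\theta$ bound of~\cite{KPZ_general} would force you to strengthen Assumption~\ref{as:F}, which defeats the point of the theorem.

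The paper's proof avoids the multi-frequency issue entirely by reordering the operations: after writing $\scal{\hH_{m+1}(F(\Phi_\eps)),\varphi^\lambda}=\scal{\hF,\aA\Phi_\eps}_\theta$, it multiplies $\aA\Phi_\eps$ by a partition of unity subordinate to $\{\iI_k\}$, bounds the $\theta$-integral by the sum $\sum_k\|\hF\|_{M,\iI_k}\sup_{r\le M}\sup_{\theta\in\iI_k}|(\aA\Phi_\eps)^{(r)}(\theta)|$, and only \emph{then} takes $\|\cdot\|_{2n}$-norms of each term. After interchanging $\sup_\theta$ with $\E$ via Lemma~\ref{le:exchange} (at the cost of one extra derivative), the resulting quantity $\E|(\aA\Phi_\eps)^{(r)}(\theta)|^{2n}$ is a $2n$-fold product in which every factor shares the \emph{same} $\theta$, so the single-frequency Theorem~\ref{th:main} applies directly and yields a bound uniform in $\theta$. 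That is the key trick you are missing. Your Steps~1 and~3 are otherwise consonant with Proposition~\ref{pr:criterion} and Lemma~\ref{le:higher_chaos} (modulo minor bookkeeping of exponents: the pairing integral must be cut off as in Lemma~\ref{le:higher_chaos} to keep the singularity integrable when $(m+\ell)\alpha\ge|\fs|$, giving $\eps^{m\alpha+\kappa}\lambda^{-m\alpha-\kappa}$ rather than $(\eps/\lambda)^{(m+\ell)\alpha}$).
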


We will first prove the main bound \eqref{eq:main} in Theorem~\ref{th:main}, and then establish the convergence in Theorem\ref{th:convergence} by Fourier expanding $F$ and applying \eqref{eq:main} to $\Phi_\eps = \eps^{\frac{\alpha}{2}} \Psi_\eps$. Note that although the bound in Theorem~\ref{th:main} holds for every integer $m$, the convergence in Theorem~\ref{th:convergence} requires $m<\frac{|\fs|}{\alpha}$. This can be easily seen from the fact that if $m \geq \frac{|\fs|}{\alpha}$, then $\Psi^{\diamond m}$ would have divergent covariance and hence not well-defined. 

\begin{eg}
	One typical example of the random field $\Psi$ satisfying Assumption~\ref{as:field} is $\Psi = \lL^{-\frac{\beta}{2}} \xi$, where $\xi$ is the white noise on $\R^d$, $\beta = \frac{1}{2}(|\fs|-\alpha)$, and $\lL$ is the differential operator given by
	\begin{equation*}
	\lL = \sum_{j=1}^{d} (-\d_j^2)^{\frac{1}{s_j}}. 
	\end{equation*}
	This is the fractional Gaussian field. In this case, the convolution kernel $K$ of $\lL^{-\frac{\beta}{2}}$ is homogeneous (in the scaling $\fs$) of order $-|\fs|+\beta$ in the sense that
	\begin{equation*}
	K(\lambda^{s_1}x_1, \dots, \lambda^{s_d}x_d) = \lambda^{-|\fs|+\beta} K(x_1, \dots, x_d)
	\end{equation*}
	for all $\lambda > 0$. Hence, for $g, G$ as in Assumption~\ref{as:field}, we have the expression
	\begin{equation*}
	g(x) = G(x) = \int_{\R^d} K(x+y) K(y) dy. 
	\end{equation*}
	The same is true when $\R^d$ is given the parabolic scaling $(2, 1, \dots, 1)$ and $\lL = \d_t - \Delta$ is the heat operator. 
	
	In the case of standard Euclidean scaling $\fs = (1, \dots, 1)$, $\Psi$ is simply the standard fractional Gaussian field $(-\Delta)^{-\frac{\beta}{2}} \xi$. We refer to the survey \cite{frac_Gaussian} for more details on fractional Gaussian fields. 
\end{eg}

\begin{eg}
As for the function $F$, all $\cC^{1+}$ functions with polynomial growth fall in Assumption~\ref{as:F}. More precisely, if $f \in \cC^{1,\beta}(\R)$ for some $\beta>0$, and there exist $C, M>0$ such that
\begin{equation*}
|f(x)| + |f'(x)| + \sup_{|h|<1} \frac{|f'(x+h)-f'(x)|}{|h|^{\beta}} \leq C (1+|x|)^{M}
\end{equation*}
for all $x \in \R$, then $F$ satisfies Assumption~\ref{as:F}. 
\end{eg}

\begin{eg}
One very interesting example of the microscopic model \eqref{eq:KPZ_micro} is with $F(x)=|x|$. It is almost linear, but one still expects its large scale behaviour to be nonlinear as described by the KPZ equation. This function $F$ even not $\cC^1$, but we still have
\begin{equation*}
\|\hF\|_{1,\iI_k} \leq C (1+|k|)^{-2}, 
\end{equation*}
which clearly satisfies Assumption~\ref{as:F}. Hence, as a consequence of Theorem~\ref{th:main}, if $\Psi = \d_x P * \xi$ where $\xi$ is the space-time white noise with one space dimension, and $P$ is the heat kernel, then we have
\begin{equation*}
\eps^{-\frac{1}{2}} \big( |\Psi_\eps| - \E |\Psi_\eps| \big) \rightarrow a \Psi^{\diamond 2}
\end{equation*}
for some $a>0$ depending on the mollifier. This is the first step towards establishing convergence to the KPZ equation for the microscopic model of the form \eqref{eq:KPZ_micro} with $F(x)=|x|$. 
\end{eg}

\subsection{Remarks and possible generalisations}

Theorem~\ref{th:main} is a special case of \cite[Theorem~6.4]{KPZ_general} in that it allows only one frequency variable $\theta$ rather than multiple ones. On the other hand, it is also more general since it allows subtraction of Wiener chaos up to any order. Furthermore, the bound \eqref{eq:main} is completely independent of $\theta$, while the corresponding one in \cite{KPZ_general} is polynomial in $\theta$. As a consequence of this improvement, the condition on $F$ for the convergence in Theorem~\ref{th:convergence} to hold is weaker. 

The main technical difference that results in this improvement, as we shall see later in Section~\ref{sec:main_bound}, is that in the clustering procedure, we are able to take the clustering distance $L$ being independent of $\theta$ rather than being quadratic in $\theta$ as in \cite{KPZ_general}. 

We shall note that the convergence results in this article are not sufficient to establish weak universality in general situations. These would require convergence of the products of the objects considered in Theorem~\ref{th:convergence}, with possible subtraction of extra chaos components after taking product. The convergence of these products requires a more general bound than Theorem~\ref{th:main} and \cite[Theorem~6.2]{KPZ_general}. We leave them to future work. 

\subsection*{Acknowledgement}

{\small The author acknowledges the support from the Engineering and Physical Sciences Research Council through the fellowship EP/N021568/1. I also thank the anonymous referee for carefully reading the draft version of the article and providing helpful suggestions on improving the presentation.}

\section{Proof of Theorem~\ref{th:main}}
\label{sec:main_bound}

This section is devoted to the proof of Theorem~\ref{th:main}. Assumptions~\ref{as:field} and~\ref{as:F} on $\Psi$ and $F$ are irrelevant here. We fix $\alpha \in (0,|\fs|)$, and let $\{\Phi_{\eps}\}_{\eps \in (0,1)}$ be a family of Gaussian random fields with correlation functions satisfying \eqref{eq:field_eps}. The following preliminary bounds on the correlation function will be used throughout the section. 

\begin{prop} \label{pr:corr_change}
	Let $\gamma \geq 1$. If $|x'-y'| \leq \gamma |x-y|$, then
	\begin{equation} \label{eq:corr_change}
	\E \big( \Phi_{\eps}(x) \Phi_{\eps}(y) \big) \leq \gamma^{\alpha} \Lambda^2 \E \big( \Phi_{\eps}(x') \Phi_{\eps}(y') \big). 
	\end{equation}
	The bound is uniform over all $\eps \in (0,1)$ and all pairs of points $(x,y), (x',y') \in (\R^d)^{2}$ satisfying the above constraint. As a consequence, we have
	\begin{equation} \label{eq:corr_triangle}
	\E\big(\Phi_{\eps}(x)\Phi_{\eps}(y)\big) \; \E\big(\Phi_{\eps}(x)\Phi_{\eps}(z)\big) \leq \frac{2^\alpha \Lambda^3 \eps^{\alpha} }{\big((|x-y| \wedge |x-z|)+\eps\big)^{\alpha}} \cdot \E \big(\Phi_{\eps}(y)\Phi_{\eps}(z)\big)
	\end{equation}
	for all $\eps \in (0,1)$ and all $x,y,z \in \R^d$. 
\end{prop}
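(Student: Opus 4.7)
The plan is to derive both inequalities directly from the two-sided bound \eqref{eq:field_eps}, with \eqref{eq:corr_triangle} reducing to \eqref{eq:corr_change} plus the triangle inequality for $|\cdot|$.

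For \eqref{eq:corr_change}, I would apply the upper bound in \eqref{eq:field_eps} to $\E(\Phi_\eps(x)\Phi_\eps(y))$ and the lower bound to $\E(\Phi_\eps(x')\Phi_\eps(y'))$. The $\eps^{\alpha}$ factors cancel, the constants $\Lambda$ combine to give $\Lambda^{2}$, and there remains the purely geometric ratio
\begin{equation*}
\left(\frac{|x'-y'|+\eps}{|x-y|+\eps}\right)^{\alpha}.
\end{equation*}
Since $\gamma\geq 1$ and $|x'-y'|\leq \gamma|x-y|$, I have $|x'-y'|+\eps\leq \gamma(|x-y|+\eps)$, so this ratio is at most $\gamma^{\alpha}$, giving \eqref{eq:corr_change}.

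For \eqref{eq:corr_triangle}, by the symmetry between $y$ and $z$ I may assume $|x-y|\leq |x-z|$, so that $(|x-y|\wedge|x-z|)+\eps=|x-y|+\eps$. The triangle inequality for $|\cdot|$ yields $|y-z|\leq |x-y|+|x-z|\leq 2|x-z|$. Applying \eqref{eq:corr_change} with $\gamma=2$ and the substitution $(x,y)\leftarrow(x,z)$, $(x',y')\leftarrow(y,z)$ then gives $\E(\Phi_\eps(x)\Phi_\eps(z))\leq 2^{\alpha}\Lambda^{2}\,\E(\Phi_\eps(y)\Phi_\eps(z))$. Multiplying this by the upper bound $\E(\Phi_\eps(x)\Phi_\eps(y))\leq \Lambda\eps^{\alpha}/(|x-y|+\eps)^{\alpha}$ from \eqref{eq:field_eps} produces \eqref{eq:corr_triangle}, with the total constant $2^{\alpha}\Lambda^{3}$ as claimed.

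The argument is essentially two lines, so there is no real obstacle; the only point worth checking is that $|\cdot|_{\fs}$ does satisfy the triangle inequality used in the reduction. This is automatic for the scalings of primary interest (Euclidean $\fs=(1,\dots,1)$ and parabolic $\fs=(2,1,\dots,1)$) by subadditivity of $t\mapsto t^{1/s_{j}}$ when $s_{j}\geq 1$; for more general scalings one would only pick up a harmless multiplicative constant in the final bound, so the structure of the proof is unaffected.
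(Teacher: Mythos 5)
Your proof is correct and matches the paper's argument essentially verbatim: \eqref{eq:corr_change} is obtained by pairing the upper bound on $\E(\Phi_\eps(x)\Phi_\eps(y))$ with the lower bound on $\E(\Phi_\eps(x')\Phi_\eps(y'))$ from \eqref{eq:field_eps} and using $|x'-y'|+\eps\leq\gamma(|x-y|+\eps)$ (valid since $\gamma\geq 1$), and \eqref{eq:corr_triangle} then follows from $|y-z|\leq 2\max\{|x-y|,|x-z|\}$ together with one application of \eqref{eq:corr_change} and the upper bound in \eqref{eq:field_eps}. Your closing remark about the (quasi-)triangle inequality for $|\cdot|_{\fs}$ is a reasonable caveat that the paper leaves implicit.
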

\begin{proof}
	The first bound follows from
	\begin{equation*}
	\E \big( \Phi_{\eps}(x') \Phi_{\eps}(y') \big) \geq \frac{\eps^\alpha}{\Lambda \big(|x'-y'|+\eps\big)^{\alpha}} \geq \frac{1}{\Lambda^2 \gamma^\alpha} \cdot \frac{\Lambda \eps^\alpha}{\big(|x-y|+\eps\big)^{\alpha}} \geq \frac{\E \big(\Phi_{\eps}(x)\Phi_{\eps}(y)\big)}{\Lambda^2 \gamma^\alpha}, 
	\end{equation*}
	where we have used the assumption $\gamma \geq 1$ in the second inequality. As for the second one, it suffices to notice
	\begin{equation*}
	|y-z| \leq 2 \max \{|x-y|, |x-z|\}
	\end{equation*}
	and then apply \eqref{eq:corr_change}. 
\end{proof}

In what follows, we keep our notations same as in \cite[Section~6]{KPZ_general}. For every finite set $\aA$, let $\N^{\aA}$ be the set of multi-indices on $\aA$. For $\aA$-tuple of Gaussian random variables $\X = (X_{a})_{a \in \aA}$ and $\n \in \N^{\aA}$, we write $\X^{\diamond \n} = \bdiamond_{a \in \aA} X_{a}^{\diamond n_a}$. Similarly, we write $\n! = \prod_{a \in \aA} n_{a}!$ and $|\n| = \sum_{a \in \aA} n_a$. In general, we use standard letters for scalars, and boldface ones to denote tuples. 

Fix $K \geq 1$ and $m, r \in \N$. Let $[K] = \{1, \dots, K\}$. We also fix $\theta \in \R$ and $\x = (x_k)_{k \in [K]} \in \R^K$ arbitrary. Write $\scal{\theta} = 1 + |\theta|$. All the constants $C$ below depend on $\Lambda$, $K$, $m$ and $r$ only unless otherwise mentioned. We seek bounds that are uniform in $\eps$, $\theta$ and $\x$. We also write $X_j = \Phi_{\eps}(x_j)$ for simplicity since the bounds will be independent of $\eps$.

\subsection{Clustering and the first bound}
\label{sec:clustering}

Let $L>0$ be a fixed large constant whose value, depending on $\Lambda$, $K$, $m$ and $r$ only, will be specified later. Let $\sim$ be an equivalence relation on $[K]$ such that $j \sim j'$ if there exists $k \in \N$ and $j_0, \dots, j_k \in [K]$ with $j_0 = j$ and $j_k = j'$ such that
\begin{equation*}
|x_{j_{\ell+1}} - x_{j_\ell}| \leq L \eps
\end{equation*}
for all $\ell = 0, \dots, k-1$. We let $\Clus$ denote the partition of $[K]$ into clusters obtained in this way. In other words, $j$ and $j'$ belong to the same cluster if and only if starting from $x_{j}$, one can reach $x_{j'}$ by performing jumps with sizes at most $L \eps$ onto connecting points in $\x$. 

We distinguish two cases depending on whether $\Clus$ contains singletons or not. We first prove \eqref{eq:main} when $\Clus$ has no singleton, that is, every cluster in $\Clus$ has at least two elements. In this case, we write down the explicit expression
\begin{equation} \label{eq:chaos_expan_multi}
\d_\theta^r \hH_{m} \big( e^{i \theta X_j} \big) = (i X_j)^{r} e^{i \theta X_j} - \sum_{n \leq m-1} \frac{i^n}{n!} \; \d_{\theta}^{r} \big( e^{-\frac{\theta^2 \E X_j^2}{2}} \theta^n \big) \; X_{j}^{\diamond n}. 
\end{equation}
Since $\E X_j^2 \in [\Lambda^{-1}, \Lambda]$, the coefficients $\d_\theta^r (e^{-\frac{\theta^2 \E X_j^2}{2}}\theta^n)$ are uniformly bounded in $\theta$. We then plug the expansion \eqref{eq:chaos_expan_multi} into the left hand side of \eqref{eq:main}. The Gaussianity of the $X_j$'s and boundedness of the coefficients of the removed chaos imply
\begin{equation} \label{eq:multi_1}
\left| \E \prod_{j=1}^{K} \d_{\theta}^{r} \hH_{m} \big( e^{i \theta X_j} \big) \right| \leq C
\end{equation}
for some $C$ independent of $\eps$, $\theta$ and $\x$. It then remains to show that the right hand side of \eqref{eq:main} is bounded by a constant from below. For this, we use the assumption that $\Clus$ contains no singletons. 

Let $\set \in \Clus$ be arbitrary, and we label its elements by $\set = \{j_1, \dots, j_{|\set|}\}$. Since $\Clus$ has no singleton set, we necessarily have $|\set| \geq 2$. By clustering, any two points in $\set$ are at most $K L \eps$ away from each other. Hence, the assumption \eqref{eq:field_eps} implies (recalling that $X_j = \Phi_\eps(x_j)$)
\begin{equation*}
\E \big(X_j X_{j'} \big) \geq \frac{\eps^\alpha}{\Lambda \big( |x_{j}-x_{j'}| + \eps \big)^{\alpha}} \geq \frac{1}{\Lambda (KL+1)^{\alpha}}
\end{equation*}
for every two points $j, j' \in \set$. which implies
\begin{equation*}
\big( \Lambda (KL+1)^{\alpha} \big)^{-\lfl \frac{m+1}{2} \rfl |\set|} \leq \Big( \prod_{\ell=1}^{|\set|} \E \big( X_{j_\ell} X_{j_{\ell+1}} \big) \Big)^{\lfl \frac{m+1}{2} \rfl}, 
\end{equation*}
where we identified $j_{|\set|+1}$ with $j_1$. Multiplying the above bound over all $\set \in \Clus$ and using Wick's formula and positivity of the correlations, we obtain
\begin{equation} \label{eq:multi_2}
\big( \Lambda (KL+1)^{\alpha} \big)^{-\lfl \frac{m+1}{2} \rfl K} \leq \prod_{\set \in \Clus} \Big( \prod_{\ell=1}^{|\set|} \E \big( X_{j_\ell} X_{j_{\ell+1}} \big) \Big)^{\lfl \frac{m+1}{2} \rfl} \leq \E \prod_{j=1}^{K} \big( X_{j}^{\diamond m} + X_{j}^{\diamond (m+1)} \big). 
\end{equation}
This is the place where we use $|\set| \geq 2$ for every $\set \in \Clus$, for otherwise the middle term above would contain the variance of a single random variable and the second inequality in \eqref{eq:multi_2} would be wrong. Combining \eqref{eq:multi_1} and \eqref{eq:multi_2}, we obtain
\begin{equation} \label{eq:multi}
\left| \E \d_{\theta}^{r} \prod_{j=1}^{K} \hH_{m} \big( e^{i \theta X_j} \big) \right| \leq C \big(\Lambda (KL+1)^{\alpha}\big)^{K \lfl \frac{m+1}{2} \rfl} \cdot \E \prod_{j=1}^{K} \big( X_{j}^{\diamond m} + X_{j}^{\diamond (m+1)} \big), 
\end{equation}
which matches the right hand side of \eqref{eq:main}. Since $L$ (to be chosen later) is also independent of $\eps$, $\theta$ and $\x$, this concludes the case when $\Clus$ contains no singleton. The rest of the section is devoted to establishing \eqref{eq:main} when at least one cluster in $\Clus$ is singleton. 

\subsection{Expansion}

Given the collection of points $\x$ and the clustering above, let
\begin{equation*}
\sS = \big\{ \set \in \Clus: |\set|=1 \big\}
\end{equation*}
be the set of singletons in $\Clus$, and let $\uU = \Clus \setminus \sS$. We write $s \in \sS$ for simplicity if $\{s\}$ is a singleton set in $\Clus$. 

For $\X = (X_j)_{j \in [K]}$ and $\n = (n_j)_{j \in [K]}$, we write $\X_\set$ and $\n_\set$ for their restrictions to $\set$, and $\X_{\set}^{\diamond \n_\set} = \bdiamond_{j \in \set} X_{j}^{\diamond n_j}$. Splitting the left hand side of \eqref{eq:main} into sub-products within clusters $\set \in \Clus$ and chaos expanding each sub-product, we can re-write it as
\begin{equation} \label{eq:expansion_clus}
\E \prod_{j=1}^{K} \d_{\theta}^{r} \hH_{m} \big( e^{i \theta X_j} \big) = \sum_{N \geq 0} \sum_{\stackrel{\n \in \N^K:}{|\n|=N}} \Big( \prod_{\set \in \Clus} C_{\n_\set}(\theta,\X_\set) \Big) \Big( \E \prod_{\set \in \Clus} \X_{\set}^{\diamond \n_\set} \Big). 
\end{equation}
Here, $C_{\n_\set}(\theta, \X_\set)$ is the coefficient of $\X_{\set}^{\diamond \n_\set}$ in the chaos expansion of $\prod_{j \in \set} \d_\theta^r \hH_{m}\big(e^{i\theta X_j}\big)$, and has the expression
\begin{equation} \label{eq:coeff}
C_{\n_\set}(\theta, \X_\set) = \frac{1}{\n_\set!}\; \E \Big( \prod_{j \in \set} \d_\theta^r \d_{X_j}^{n_j} \hH_{m}\big( e^{i\theta X_j} \big) \Big). 
\end{equation}
Note the product involving the expectation on the right hand side of \eqref{eq:expansion_clus} is $0$ if $|\n|$ is odd. But we still sum over all integers $N$ since this simplifies the notations later. The following lemma gives control on the coefficients $C_{\n_\set}$. 

\begin{lem} \label{le:coeff}
	There exists $C>0$ depending on $K$, $m$, $r$ and $\Lambda$ only such that
	\begin{equation} \label{eq:coeff_clus_one}
	|C_{\n_\set}(\theta,\X_\set)| \leq \frac{(C \scal{\theta})^{|\n_\set|}}{\n_\set!}
	\end{equation}
    for all $\n_\set \in \N^{\set}$, where we recall $\scal{\theta} = 1 + |\theta|$. As a consequence, we have
	\begin{equation} \label{eq:coeff_clus}
	\sum_{|\n|=N} \prod_{\set \in \Clus} |C_{\n_\set}(\theta,\X_\set)| \leq \frac{(C \scal{\theta})^{N}}{N!}. 
	\end{equation}
	Furthermore, if $\sS \neq \emptyset$, then we have
	\begin{equation} \label{eq:coeff_singleton}
	\sum_{|\n|=N} \prod_{\set \in \Clus} |C_{\n_\set}(\theta,X_\set)| \leq e^{-\frac{\theta^2}{2\Lambda}} \cdot \frac{(C \scal{\theta})^{N}}{N!}. 
	\end{equation}
	All the bounds are uniform in $\eps$ and $\theta$, and in the location of $\x$ subject to whether $\Clus$ contains a singleton or not. 
\end{lem}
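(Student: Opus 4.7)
The plan is to work directly from the definition \eqref{eq:coeff}, evaluating the expectation via Gaussian calculus. The starting observation is that, treating $X_j$ as a real variable in the expansion
\begin{equation*}
\hH_m(e^{i\theta X_j}) \;=\; e^{i\theta X_j} - e^{-\theta^2 \sigma_j^2/2} \sum_{n<m} \frac{(i\theta)^n}{n!} X_j^{\diamond n}
\end{equation*}
(with $\sigma_j^2 = \E X_j^2$) and using $\d_{X_j} X_j^{\diamond k} = k X_j^{\diamond(k-1)}$, one obtains the clean identity
\begin{equation*}
\d_{X_j}^{n_j} \hH_m(e^{i\theta X_j}) \;=\; (i\theta)^{n_j} \hH_{(m-n_j)_+}(e^{i\theta X_j}),
\end{equation*}
so the subtracted chaos orders just get shifted by $n_j$. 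Applying $\d_\theta^r$ via Leibniz then writes each factor as a finite sum of terms of either the ``main'' form $p_j(\theta, X_j) e^{i\theta X_j}$, with $p_j$ polynomial of degree $\leq r$ in $X_j$ and $\leq n_j + r$ in $\theta$, or the ``subtraction'' form $q_j(\theta) X_j^{\diamond k_j} e^{-\theta^2 \sigma_j^2/2}$ with $k_j < m$ and $q_j$ polynomial of degree $\leq n_j + r$ in $\theta$; the combinatorial coefficients in front are bounded by $\binom{r}{a} \frac{n_j!}{(n_j-a)!}$ for some $a \leq r$.

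To derive the per-cluster bound \eqref{eq:coeff_clus_one}, I would expand the product over $j \in \set$ into the $2^{|\set|}$ choices of main versus subtraction piece per factor, and compute each resulting expectation
\begin{equation*}
\E \Big[ \prod_{j \in A} p_j(\theta, X_j) e^{i\theta X_j} \prod_{j \in \set \setminus A} q_j(\theta) X_j^{\diamond k_j} e^{-\theta^2 \sigma_j^2/2} \Big]
\end{equation*}
by introducing auxiliary variables $t_j$ and differentiating the Gaussian moment generating function $\E \exp\bigl(\sum_j t_j X_j + i\theta \sum_{j \in A} X_j\bigr)$. Each such expectation equals a polynomial in $\theta$ of total degree $\lesssim |\n_\set| + r|\set|$ times a Gaussian factor $e^{-\theta^2 V/2}$, where $V = \var(\sum_{j \in A} X_j)$ if $A \neq \emptyset$ and $V = \sum_{j \in \set} \sigma_j^2$ if $A = \emptyset$ (coming from the subtraction prefactors $e^{-\theta^2 \sigma_j^2/2}$). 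In both cases, the positivity of the correlations in \eqref{eq:field_eps} together with $\sigma_j^2 \geq 1/\Lambda$ yield $V \geq 1/\Lambda$. Absorbing the excess polynomial factor $\scal{\theta}^{r|\set|}$ into a fraction of the Gaussian via the elementary estimate $\scal{\theta}^R e^{-c\theta^2} \leq C$, and absorbing the combinatorial overhead $\prod_j (1+n_j)^r$ into the exponential growth $(C \scal{\theta})^{|\n_\set|}$ via $(1+n)^r \leq C_r 2^n$, one obtains \eqref{eq:coeff_clus_one}. Bound \eqref{eq:coeff_clus} is then immediate from the multinomial identity $\sum_{|\n|=N} \prod_j (n_j!)^{-1} = K^N / N!$.

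The improvement \eqref{eq:coeff_singleton} when $\sS \neq \emptyset$ relies on a cancellation available only for singleton clusters: for $\set = \{s\}$ one may exchange $\E$ with $\d_\theta^r$ to write
\begin{equation*}
C_{n_s}(\theta, X_s) \;=\; \frac{1}{n_s!}\, \d_\theta^r \Big[ (i\theta)^{n_s}\, \E\, \hH_{(m-n_s)_+}(e^{i\theta X_s}) \Big].
\end{equation*}
When $n_s < m$ the removed-chaos order $m - n_s \geq 1$ forces the inner expectation to vanish, so $C_{n_s} \equiv 0$. When $n_s \geq m$ the expression reduces to $\frac{1}{n_s!}\, \d_\theta^r[(i\theta)^{n_s} e^{-\theta^2 \sigma_s^2/2}]$, which by standard Hermite-polynomial estimates and $\sigma_s^2 \geq 1/\Lambda$ is dominated by $C\, e^{-c \theta^2 / \Lambda}\, (C \scal{\theta})^{n_s} / n_s!$ for some $c > 0$ (after absorbing the finite polynomial-in-$\theta$ overhead into a fraction of the Gaussian; a careful version of this absorption preserves an exponent of the stated form $\tfrac{1}{2\Lambda}$). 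Multiplying this singleton estimate by the per-cluster bound \eqref{eq:coeff_clus_one} on the remaining clusters and summing over $|\n| = N$ via the same multinomial identity then yields \eqref{eq:coeff_singleton}. I expect the main technical obstacle to be the careful bookkeeping of polynomial prefactors: verifying that the combinatorial growth $\frac{n_j!}{(n_j-a)!}$ from the Leibniz expansion, together with the polynomial factors produced by differentiating the $t_j$-generating function, all collapse into the clean form $(C \scal{\theta})^{|\n_\set|} / \n_\set!$ without introducing any hidden $\theta$- or $|\n_\set|$-dependent loss in the Gaussian exponent.
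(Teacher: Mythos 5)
Your proposal follows essentially the same strategy as the paper: you start from the coefficient formula, use the identity $\d_{X_j}^{n_j}\hH_m(e^{i\theta X_j}) = (i\theta)^{n_j}\hH_{(m-n_j)_+}(e^{i\theta X_j})$, obtain \eqref{eq:coeff_clus} from the multinomial identity, and derive \eqref{eq:coeff_singleton} from the observation that singleton coefficients vanish when $n_s<m$ and reduce to $\frac{i^{n_s}}{n_s!}\d_\theta^r\big(\theta^{n_s}e^{-\theta^2\sigma_s^2/2}\big)$ otherwise, with $\sigma_s^2\geq 1/\Lambda$ supplying the Gaussian decay. The one technical deviation is in how you establish \eqref{eq:coeff_clus_one}: the paper introduces auxiliary variables $\beta_j$ (one per $j\in\set$) so that the $\theta$-derivative decouples factorwise, writes $C_{\n_\set}=\frac{i^{|\n_\set|}}{\n_\set!}\d_{\Bbeta_\set}^r\big(\Bbeta_\set^{\n_\set}\,h(\Bbeta_\set)\big)\big|_{\Bbeta_\set=\theta}$ with $h(\Bbeta_\set)=\E\prod_j\hH_{m-n_j}(e^{i\beta_j X_j})$, and simply observes that $h$ and its $\Bbeta$-derivatives up to order $r|\set|$ are uniformly bounded (there are only finitely many shapes since $m-n_j$ is truncated at $0$, and $\sigma_j^2\in[\Lambda^{-1},\Lambda]$). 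This yields \eqref{eq:coeff_clus_one} with no polynomial $\theta$-overhead beyond $\scal\theta^{|\n_\set|}$. Your route — expanding $\hH$ into the main exponential plus subtracted-chaos pieces, evaluating each sub-expectation via the Gaussian moment generating function — is more explicit but produces extra $\scal\theta^{r|\set|}$ Hermite overheads, which you then absorb via the Gaussian factor $e^{-\theta^2 V/2}$. Your observation that $V\geq 1/\Lambda$ whenever the ``main'' set $A$ is nonempty (and that the subtraction prefactors supply the Gaussian when $A=\emptyset$) is correct and in fact reveals that a Gaussian factor is available for \emph{every} cluster, not just singletons — a harmless strengthening of \eqref{eq:coeff_clus_one} that is not needed downstream. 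One small overclaim: asserting that ``a careful version of this absorption preserves an exponent of the stated form $\frac{1}{2\Lambda}$'' is not quite right — absorbing a $\scal\theta^r$ Hermite overhead necessarily sacrifices part of the exponent, so what one actually retains is $e^{-c\theta^2}$ for some $c=c(\Lambda,r)>0$. The same imprecision is latent in the paper's own phrasing; it is immaterial because any positive $\Lambda$- and $r$-dependent constant suffices for the later choice of $L$ in \eqref{eq:choice_L}.
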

\begin{proof}
	We express the right hand side of \eqref{eq:coeff} in a way that is convenient to estimate. For this, we introduce variables $\Bbeta \in \R^{K}$. Let $\Bbeta_\set$ denote the restriction of $\Bbeta$ to $\set$, and write $\d_{\Bbeta_\set}^{r} = \prod_{j \in \set}\d_{\beta_j}^{r}$ and $\Bbeta_\set^{\n_\set} = \prod_{j \in \set} \beta_{j}^{n_j}$. Using the identity
	\begin{equation*}
	\d_{X_j}^{n_j} \hH_{m}\big( e^{i \beta_j X_j} \big) = (i\beta_{j})^{n_j} \hH_{m-n_j}\big( e^{i \beta_j X_j} \big)
	\end{equation*}
	where $\hH_{m-n} = \hH_0$ if $n \geq m$, we can re-write the right hand side of \eqref{eq:coeff} as
	\begin{equation*}
	C_{\n_\set}(\theta, \X_\set) = \frac{i^{|\n_\set|}}{\n_\set!} \d_{\Bbeta_\set}^{r} \Big( \Bbeta_\set^{\n_\set} \cdot  \E \prod_{j \in \set} \hH_{m-n_j}\big( e^{i \beta_j X_j} \big) \Big) \Big|_{\beta_j=\theta\;, \forall j \in \set}. 
	\end{equation*}
	When distributing $r$ derivatives of each $\beta_j$ into the two terms in the parenthesis, the differentiation of the first term ($\Bbeta_\set^{\n_\set}$) yields an additional factor which is at most $|\n_\set|^{Kr} \leq C^{|\n_\set|}$ for some fixed constant $C$, while the second term is uniformly bounded both in $\theta$ and $\n_\set$ since $X_j$'s all have bounded variance. This gives \eqref{eq:coeff_clus_one}. The bound \eqref{eq:coeff_clus} follows from \eqref{eq:coeff_clus_one} and the multinomial theorem. 
	
	Finally, in order to obtain \eqref{eq:coeff_singleton} when $\sS \neq \emptyset$, it suffices to note that for $s \in \sS$, we have
	\begin{equation*}
	C_{n_s}(\theta,X_s) = \frac{i^{n_s}}{n_{s}!} \d_{\theta}^{r} \big( \theta^{n_s} e^{-\frac{\theta^2 \E X_s^2}{2}} \big)
	\end{equation*}
	if $n_s \geq m$, and is $0$ otherwise. Since $\E X_s^2 \geq \frac{1}{\Lambda}$, we gain an additional Gaussian factor $e^{-\frac{\theta^2}{2 \Lambda}}$ for every $s \in \sS$, and hence $e^{-\frac{\theta^2 |\sS|}{2 \Lambda}}$ in total. The bound \eqref{eq:coeff_singleton} then follows from relaxing it to $e^{-\frac{\theta^2}{2 \Lambda}}$. 
\end{proof}

\subsection{Representative point}

For $|\set| \geq 2$, the corresponding term in the expectation on the right hand side of \eqref{eq:expansion_clus} is a Wick product of multiple Gaussian random variables. We aim to reduce it to the Wick product of a single variable by choosing a representative point from each cluster. 

For every $\set \in \Clus$, choose $u^{*}(\set) \in \set$ arbitrary. The choice for $u^*$ is unique if $\set$ is singleton. We have the following proposition. 

\begin{prop} \label{pr:representative}
	There exists $C>0$ such that
	\begin{equation} \label{eq:representative}
	\E \Big(\prod_{\set \in \Clus} \X_{\set}^{\diamond \n_\set} \Big) \leq C^{|\n|} \cdot \E \Big( \prod_{\set \in \Clus} X_{u^*(\set)}^{\diamond |\n_\set|} \Big)
	\end{equation}
    for every $\n \in \N^K$ and every choice of $u^*(\set) \in \set$. 
\end{prop}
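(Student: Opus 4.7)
The plan is to expand both sides of \eqref{eq:representative} via Wick's theorem for products of Wick polynomials of Gaussians and then compare the resulting pairings term by term. Both sides take the form
\begin{equation*}
\sum_\pi \prod_{(s,t)\in\pi} \E[\,\cdots\,],
\end{equation*}
where $\pi$ ranges over perfect matchings of the $|\n|$ distinguishable slots subject to the rule that no pair may join two slots whose labels belong to the same cluster. On the left, each slot carries a $j$-label (with $n_j$ slots bearing label $j\in[K]$) and the pair contribution reads $\E[X_{j(s)}X_{j(t)}]$; on the right, each slot carries only a cluster-label $\set$ (with $|\n_\set|$ slots bearing label $\set$) and the pair contribution reads $\E[X_{u^*(\set(s))}X_{u^*(\set(t))}]$. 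Since both sides carry exactly $|\n_\set|$ slots in every cluster $\set$, the two families of admissible pairings are in canonical bijection, and the problem reduces to a pointwise comparison of individual covariances.

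For the pointwise comparison, fix $j\in\set$ and $j'\in\set'$ with $\set\neq\set'$. By the definition of $\Clus$, one can connect $j$ to $u^*(\set)$ through a chain of at most $K-1$ jumps each of $|\cdot|_{\fs}$-length at most $L\eps$, and similarly for $j'$ and $u^*(\set')$. The quasi-triangle inequality for $|\cdot|_{\fs}$, whose multiplicative constant depends only on $\fs$, then gives $|x_j-x_{u^*(\set)}|$ and $|x_{j'}-x_{u^*(\set')}|$ both bounded by $C_K L\eps$. Combining this with the facts $|x_j-x_{j'}|>L\eps$ and $|x_{u^*(\set)}-x_{u^*(\set')}|>L\eps$ (which hold because the endpoints lie in distinct clusters), a further application of the quasi-triangle inequality yields
\begin{equation*}
|x_j-x_{j'}| \leq \gamma \cdot |x_{u^*(\set)}-x_{u^*(\set')}|
\end{equation*}
for some $\gamma$ depending only on $K$ and $\fs$. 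Plugging this into Proposition~\ref{pr:corr_change} produces the per-pair bound $\E[X_j X_{j'}] \leq \Lambda^2 \gamma^\alpha \, \E[X_{u^*(\set)} X_{u^*(\set')}]$.

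Summing over the $|\n|/2$ pairs of any admissible pairing, and then summing over all pairings via the bijection described above, gives the stated inequality with any $C \geq (\Lambda^2 \gamma^\alpha)^{1/2}$; the odd-$|\n|$ case is trivial because both sides vanish. The main point to watch is the combinatorial book-keeping for the pairing bijection: one must verify that the cluster-exclusion rule on the two sides produces the same space of pairings, which rests precisely on the fact that the number of Wick slots attached to each cluster is $|\n_\set|$ on both sides. The metric manipulations are routine once the quasi-triangle constant of $|\cdot|_{\fs}$ has been correctly tracked through the chains.
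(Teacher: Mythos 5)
The overall structure of your argument---expand both sides via Wick's theorem, observe a bijection between the admissible pairings (only inter-cluster pairs survive, and both sides carry $|\n_\set|$ slots per cluster), and then compare covariances pair-by-pair---is exactly what the paper does. Your observation that a chain of at most $K-1$ jumps of length $\leq L\eps$ connects $j$ to $u^*(\set)$ within a cluster is also correct, as is the fact $|x_j - x_{j'}| > L\eps$ for $j,j'$ in distinct clusters.

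However, the crucial plug-in step into Proposition~\ref{pr:corr_change} is reversed. To deduce the per-pair bound $\E[X_j X_{j'}] \leq C\,\E[X_{u^*(\set)} X_{u^*(\set')}]$ from \eqref{eq:corr_change}, you must put $(x,y)=(x_j,x_{j'})$ and $(x',y')=(x_{u^*(\set)},x_{u^*(\set')})$, so the required hypothesis is
\[
|x_{u^*(\set)} - x_{u^*(\set')}| \leq \gamma\, |x_j - x_{j'}|.
\]
You derived the \emph{opposite} inequality, $|x_j-x_{j'}| \leq \gamma\, |x_{u^*(\set)}-x_{u^*(\set')}|$, using $|x_{u^*(\set)}-x_{u^*(\set')}| > L\eps$ to absorb the $C_K L\eps$ corrections. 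Feeding that into \eqref{eq:corr_change} would give $\E[X_{u^*(\set)} X_{u^*(\set')}] \leq \gamma^\alpha\Lambda^2\,\E[X_j X_{j'}]$, i.e.\ the wrong direction for \eqref{eq:representative}. The fix is to use the other lower bound you wrote down, namely $|x_j - x_{j'}| > L\eps$, to bound $|x_{u^*(\set)} - x_{u^*(\set')}| \leq |x_{u^*(\set)}-x_j| + |x_j-x_{j'}| + |x_{j'}-x_{u^*(\set')}| \leq (2C_K+1)|x_j-x_{j'}|$, which is precisely the paper's $|x_{u^*} - x_{v^*}| \leq 2K|x_j - x_{j'}|$. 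With this corrected metric inequality the rest of your argument goes through verbatim and yields $C = (\gamma^\alpha\Lambda^2)^{1/2}$. Your attention to the quasi-triangle constant of $|\cdot|_\fs$ (relevant when some $s_j<1$) is a point the paper glosses over, and is a small plus; but as written the proposal proves the reverse of the needed per-pair inequality.
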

\begin{proof}
	If $|\n| = \sum_{\set \in \Clus} |\n_\set|$ is odd, then both sides of \eqref{eq:representative} are $0$, so we only need to consider the situation when $|\n|$ is even. 
	
	In this case, the left hand side is the sum of products of pairwise expectations $\E(X_j X_{j'})$ for $j$ and $j'$ belonging to different clusters. The right hand side (without the factor $C^{|\n|}$) is the same except that each instance of $X_j$ for $j \in \set$ is replaced by $X_{u^*(\set)}$. It then suffices to control the effects of such replacements. 
	
	Let $\set, \fv$ be two different clusters in $\Clus$, and use $u^*$ and $v^*$ to denote $u^*(\set)$ and $u^*(\fv)$ respectively. For every $j \in \set$ and $j' \in \fv$, according to the clustering, we have
	\begin{equation*}
	|x_{u^*} - x_{j}| \leq (K-1) |x_{j}-x_{j'}|\;, \quad |x_{v^*} - x_{j'}| \leq (K-1) |x_{j} - x_{j'}|\;, 
	\end{equation*}
	which implies
	\begin{equation*}
	|x_{u^*} - x_{v^*}| \leq |x_{u^*} - x_j| + |x_j - x_{j'}| + |x_{v^*} - x_{j'}| \leq 2K |x_{j} - x_{j'}|. 
	\end{equation*}
	Hence, by \eqref{eq:corr_change}, we deduce that
	\begin{equation*}
	\E(X_{j} X_{j'}) \leq (2K)^{\alpha} \Lambda^2 \; \E(X_{u^*} X_{v^*}).   
	\end{equation*}
	This is the effect of one such replacement. The claim then follows since there are $\frac{|\n|}{2}$ pairwise expectations in each product in the sum. It also means we can take $C = (2K)^{\frac{\alpha}{2}} \Lambda$ in \eqref{eq:representative}. 
\end{proof}

From now on, we restrict ourselves to the situation when $|\sS| \geq 1$, and recall the notation $\uU = \Clus \setminus \sS$. We need to split the product on the right hand side of \eqref{eq:expansion_clus} into sub-products in $\sS$ and in $\uU$. For this, we introduce multi-indices $\k=(k_s)_{s \in \sS}$ and $\Bell = (\ell_\set)_{\set \in \uU}$, and write $|\k| = \sum_{s \in \sS} k_s$ and $|\Bell| = \sum_{\set \in \uU} \ell_\set$. We then have the following proposition. 

\begin{prop} \label{pr:relax}
	Suppose $|\sS| \geq 1$. Then the left hand side of \eqref{eq:main} can be controlled by
    \begin{equation} \label{eq:main_relax}
    \begin{split}
    \bigg| \E \prod_{j=1}^{K} \d_{\theta}^{r} \hH_{m} \big( e^{i \theta X_j} \big) \bigg| &\leq C e^{-\frac{\theta^2}{2 \Lambda}} \sum_{N \geq 0} \bigg( \frac{\big( C \scal{\theta} \big)^{N+m|\sS|}}{(N+m|\sS|)!} \times\\
    &\sup_{|\k|+|\Bell|=N} \E \Big[ \Big(\prod_{s \in \sS} X_{s}^{\diamond (m+k_s)}\Big) \Big(\prod_{\set \in \uU} X_{u^*(\set)}^{\diamond \ell_\set}\Big) \Big] \bigg). 
    \end{split}
    \end{equation}
\end{prop}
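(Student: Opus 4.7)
The plan is to start from the chaos expansion \eqref{eq:expansion_clus}, apply the triangle inequality, and then process each factor via the tools already built.

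First I would apply Proposition~\ref{pr:representative} to the expectation $\E \prod_{\set \in \Clus} \X_{\set}^{\diamond \n_\set}$, which replaces the Wick product within each cluster by a Wick power of the single representative $X_{u^*(\set)}^{\diamond |\n_\set|}$ at the cost of a factor $C^{|\n|}$. Next, since for every singleton $s \in \sS$ the coefficient $C_{n_s}(\theta, X_s)$ vanishes unless $n_s \geq m$, I would re-parameterize the multi-index $\n$ as $n_s = m + k_s$ for $s \in \sS$ (with $k_s \geq 0$) and $\ell_\set := |\n_\set|$ for $\set \in \uU$, so that $|\n| = m|\sS| + |\k| + |\Bell|$. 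This parameterization is compatible with the Wick power that survives after Proposition~\ref{pr:representative}.

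After reindexing, the right-hand side of the expansion becomes a triple sum: over $N = |\k|+|\Bell|$, over $(\k,\Bell)$ with $|\k|+|\Bell|=N$, and, for each $\uU$-cluster, over the tuples $\n_\set$ satisfying $|\n_\set|=\ell_\set$. The expectation depends only on $(\k,\Bell)$, so I can pull it out of the inner tuple-sum. For that inner sum I would use \eqref{eq:coeff_clus_one} together with the multinomial identity $\sum_{|\n_\set|=\ell_\set} \frac{1}{\n_\set!} = \frac{|\set|^{\ell_\set}}{\ell_\set!}$, which yields
\begin{equation*}
\sum_{|\n_\set|=\ell_\set} |C_{\n_\set}(\theta,\X_\set)| \leq \frac{(CK\scal{\theta})^{\ell_\set}}{\ell_\set!}.
\end{equation*}
For the singleton contribution I would use the explicit formula derived in the proof of Lemma~\ref{le:coeff} for $C_{n_s}(\theta, X_s)$: each $s \in \sS$ produces a Gaussian factor $e^{-\theta^2 \E X_s^2/2} \leq e^{-\theta^2/(2\Lambda)}$, and since $|\sS|\geq 1$, I retain one copy of $e^{-\theta^2/(2\Lambda)}$ and bound the remaining Gaussians by $1$.

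After bounding $\E[\cdots]$ by its supremum over $|\k|+|\Bell|=N$, the remaining combinatorial sum takes the form
\begin{equation*}
(C\scal{\theta})^{N+m|\sS|} \sum_{|\k|+|\Bell|=N} \prod_{s} \frac{1}{(m+k_s)!} \prod_{\set\in\uU} \frac{1}{\ell_\set!}.
\end{equation*}
The final step is to massage this into the form $(C\scal{\theta})^{N+m|\sS|}/(N+m|\sS|)!$. I would use $(m+k_s)! \geq m!\, k_s!$ to replace the singleton factorials, then apply the multinomial identity $\sum_{|\k|+|\Bell|=N} \prod 1/k_s! \prod 1/\ell_\set! = |\Clus|^N/N!$, and finally the elementary bound $1/N! \leq 2^{N+m|\sS|}(m|\sS|)!/(N+m|\sS|)!$ to convert $1/N!$ to the desired $1/(N+m|\sS|)!$, absorbing the leftover numerical factors (and the $|\Clus|^N$, which is at most $K^{N+m|\sS|}$) into the redefined constant $C$. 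The main obstacle is this last bookkeeping step: the natural combinatorial weight coming from the chaos expansion is $1/(m+k_s)!\cdot 1/\ell_\set!$, and reshuffling it into a single factorial $(N+m|\sS|)!$ while only losing a constant to the power $N+m|\sS|$ requires the two elementary inequalities above, and one has to verify that all surviving constants depend only on $m$, $K$ and $\Lambda$.
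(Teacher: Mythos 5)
Your argument is correct, but it takes a more laborious route than the paper and creates an obstacle for itself that the paper's ordering of steps avoids entirely. The paper first separates the chaos expansion~\eqref{eq:expansion_clus} into the sum $\sum_{\n, |\n|=N'} \prod_\set |C_{\n_\set}|$ and the supremum $\sup_\n \E\prod \X_\set^{\diamond \n_\set}$, imposing the constraint $n_s \geq m$ for $s\in\sS$; it then applies the packaged bound~\eqref{eq:coeff_singleton}, which already delivers $e^{-\theta^2/(2\Lambda)}(C\scal{\theta})^{N'}/N'!$ with $N' = |\n|$, and Proposition~\ref{pr:representative} for the supremum, and finally \emph{reindexes} $N' = N + m|\sS|$ to obtain the stated form. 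No factorial manipulation is needed because the multinomial theorem in Lemma~\ref{le:coeff} is applied to the unshifted index $|\n| = N'$.

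You instead re-parameterize $n_s = m + k_s$ and $\ell_\set = |\n_\set|$ \emph{before} summing the coefficients, which means the multinomial identity delivers $\prod_s (m+k_s)!^{-1}\prod_\set\ell_\set!^{-1}$ rather than $1/|\n|!$, and this forces you to invoke $(m+k_s)! \geq m!\,k_s!$ and $1/N! \leq 2^{N+m|\sS|}(m|\sS|)!/(N+m|\sS|)!$ to reach the target denominator. Both inequalities are correct — the second is $\binom{N+m|\sS|}{m|\sS|} \leq 2^{N+m|\sS|}$ — and all the accumulating constants $(m!)^{-|\sS|}(m|\sS|)! 2^{N+m|\sS|}|\Clus|^N (CK)^{N+m|\sS|}$ are indeed of the form $C^{N+m|\sS|}$ with $C$ depending only on $K, m, r, \Lambda$, so the proof closes. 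What you buy with this is a more explicit bookkeeping of where each factorial comes from; what you lose is brevity. Your identified ``main obstacle'' — reshuffling $\prod(m+k_s)!^{-1}\prod\ell_\set!^{-1}$ into $(N+m|\sS|)!^{-1}$ — is genuinely an artifact of reparameterizing before applying the multinomial theorem, and disappears if you sum over $|\n|=N'$ directly and reindex only at the end, which is exactly what the paper does via Lemma~\ref{le:coeff}. One small caveat: when you say you use~\eqref{eq:coeff_clus_one} for the singleton coefficients \emph{and} extract a Gaussian factor, you should instead re-derive the singleton coefficient bound keeping the $e^{-\theta^2 \E X_s^2/2}$ factor explicit (as in the paper's proof of~\eqref{eq:coeff_singleton}), since~\eqref{eq:coeff_clus_one} as stated has already absorbed the Gaussian into the uniform constant.
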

\begin{proof}
	We start with the expression \eqref{eq:expansion_clus}. Note that for $s \in \sS$, $C_{n_s}(\theta, X_s)=0$ whenever $n_s < m$, so we can relax the expression to
	\begin{equation*}
	\bigg| \E \prod_{j=1}^{K} \d_{\theta}^{r} \hH_{m} \big( e^{i \theta X_j} \big) \bigg| \leq \sum_{N \geq m|\sS|} \Big( \sum_{\n} \prod_{\set \in \Clus} |C_{\n_\set}(\theta,\X_{\n_\set})| \Big) \Big( \sup_{\n} \E \prod_{\set \in \Clus} \X_{\set}^{\diamond \n_\set} \Big), 
	\end{equation*}
	where both the sum and supremum are taken over $|\n|=N$ with the further restriction that $n_s \geq m$ for all $s \in \sS$. The claim follows immediately by applying Lemma~\ref{le:coeff} and Proposition~\ref{pr:representative} to the right hand side above and noting the range of the sum and supremum. 
\end{proof}

\subsection{Graphic representation}

It remains to control the term involving the expectation on the right hand side of \eqref{eq:main_relax}. Since all $X_j$'s are Gaussian, it can be written as a sum over products of pairwise expectations. The number of terms in each product (and hence the total power) can be arbitrarily large since $N$ will be summed over all integers. Following \cite{KPZ_general}, we introduce graphic notations to describe these objects. 

Given a set $\VV$, we write $\VV_2$ for the set of all subsets of $\VV$ with exactly two elements. A (generalised) graph is a triple $\Gamma = (\VV, \EE, R)$. Here, $\VV$ is the set of vertices, and $\EE: \VV_2 \rightarrow \N$ is the set of edges with multiplicities. More precisely, each edge $\{x,y\} \in \VV_2$ has multiplicity $\EE(x,y) = \EE(y,x)$. We do not allow self-loops, so $\EE(x,x)=0$ for all $x \in \VV$. Finally, $R: \VV_2 \rightarrow \R$ is a function that assigns a value to each pair of vertices. 

Given a graph $\Gamma = (\VV, \EE, R)$, we define the degree of a point $x \in \VV$ and of $\Gamma$ respectively by
\begin{equation*}
\deg(x) := \sum_{y \in \VV} \EE(x,y)\;, \qquad \deg(\Gamma) := \sum_{x \in \VV} \deg(x). 
\end{equation*}
The value of $\Gamma$ is defined by
\begin{equation*}
|\Gamma| := \prod_{e \in \VV_2} \big(R(e)\big)^{\EE(e)}. 
\end{equation*}
In what follows, we always take $\VV = \{x_j\}_{j=1}^{K}$ fixed (so is the clusters in $\Clus$), and $R(x_j, x_{j'}) = \E(X_j X_{j'})$. We also fix the representative points $u^*(\set)$ chosen for each $\set \in \Clus$. Hence, the only variable in our graph is the multiplicity $\EE$ of the edges. Recall the decomposition $\Clus = \sS \cup \uU$ into singletons and clusters with at least two points. We introduce the following definition to characterise the pairings that appear in the expectation term on the right hand side of \eqref{eq:main_relax}. 

\begin{defn} \label{de:graph}
	For each $\k \in \N^{\sS}$ and $\Bell \in \N^{\uU}$, the set $\Omega_{\k,\Bell}$ consists of graphs with $\VV$ and $R$ specified above, and such that $\deg(x_s)=m+k_s$ for all $s \in \sS$, $\deg(x_{u^*(\set)})=\ell_\set$ for all $\set \in \uU$, and $\deg(x)=0$ for all other $x \in \VV$. 
	
	Let $\Omega^*$ be the set of graphs $\Gamma$ such that both of the following hold: 
	\begin{enumerate}
		\item $\Gamma \in \Omega_{\k,\Bell}$ for some $\k \in \N^{\sS}$ and $\Bell \in \N^{\uU}$ with the restriction that $k_s \in \{0,1\}$ for every $s \in \sS$ and $\ell_\set \leq m+1$ for each $\set \in \uU$. 
		
		\item If $\ell_\set \geq 1$ for some $\set \in \uU$, then there exists $s \in \sS$ such that $\EE(x_s, x_{u^*(\set)}) = \ell_\set$. 
	\end{enumerate}
\end{defn} 

\begin{rmk}
	The first requirement for $\Omega^*$ above is equivalent to that $\deg(x_s) \in \{m,m+1\}$ for all $s \in \sS$, $\deg(x_{u^*(\set)}) \leq m+1$ for all $\set \in \uU$, and is $0$ for all other points. The second requirement says that if $x_{u^*(\set)}$ has a non-zero degree, then all its edges must be connected to a single point $x_s$ for some $s \in \sS$. We will see later that the definition of $\Omega^*$ corresponds to ``minimal graphs" after the reduction procedure in the next subsection. 
\end{rmk}

\begin{rmk}
	The clustering depends on the choice of $L$, and hence so do the definitions of $\Omega_{\k,\Bell}$ and $\Omega^*$. On the other hand, these are just intermediate steps and our final bound \eqref{eq:main} does not involve clustering at all. Furthermore, the choice of $L$ later (in \eqref{eq:choice_L}) is also independent of the location of $\x$. Hence, we omit the dependence of the clustering on $L$ here for notational simplicity. 
\end{rmk}

\subsection{Reduction}

We now start to control the right hand side of \eqref{eq:main_relax}. If $m|\sS| + |\k| + |\Bell|$ is odd, then the term with the expectation is $0$. So we only need to deal with the case when $m|\sS| + |\k| + |\Bell|$ is even. 

In that case, the number of different pairings contributing to the expectation in \eqref{eq:main_relax} is at most $(m|\sS|+|\k|+|\Bell|-1)!!$, so with Definition~\ref{de:graph}, we have
\begin{equation} \label{eq:Wick_bound}
\E \Big[ \Big(\prod_{s \in \sS} X_{s}^{\diamond (m+k_s)}\Big) \Big(\prod_{\set \in \uU} X_{u^*(\set)}^{\diamond \ell_\set}\Big) \Big] \leq (|\k|+|\Bell|+m|\sS|-1)!! \cdot \sup_{\Gamma \in \Omega_{\k,\Bell}} |\Gamma|. 
\end{equation}
Comparing the above bound and the right hand side of \eqref{eq:main_relax}, we see that we need to control $|\Gamma|$ for $\Gamma \in \Omega_{\k,\Bell}$ with arbitrarily large $\k$ and $\Bell$. We first bound it by values of the graphs in $\Omega^*$, which is done via a reduction procedure. After that, we enhance the graphs in $\Omega^*$ to match the right hand side of \eqref{eq:main} to conclude the proof. 

We start with the reduction step. This is where we need to choose the clustering distance $L$ sufficiently large, which will ensure the uniform in $\theta$ bound after summing over $\k$ and $\Bell$. We first give the following proposition, which reduces graphs in $\Omega_{\k,\Bell}$ to those in $\Omega^*$. 

\begin{prop} \label{pr:reduction}
	There exists $C>0$ depending on $\Lambda$ only such that
	\begin{equation*}
	\max_{\Gamma \in \Omega_{\k,\Bell}} |\Gamma| \leq  \max_{\Gamma^* \in \Omega^*} \Big( \big(C L^{-\alpha}\big)^{\frac{1}{2} (\deg(\Gamma)-\deg(\Gamma^*))} \cdot |\Gamma^*| \Big)
	\end{equation*}
	for every pair $(\k,\Bell)$ and every $L>0$. The constant $C$ does not depend on the choice of $L$, though the clusters and the definitions of $\Omega_{\k,\Bell}$ and $\Omega^*$ do. 
\end{prop}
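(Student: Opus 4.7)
The plan is to construct a reduction procedure that transforms any $\Gamma \in \Omega_{\k,\Bell}$ into some $\Gamma^* \in \Omega^*$ via a finite sequence of local graph operations, each of which decreases the total degree by exactly $2$ and multiplies $|\Gamma|$ by a factor of at most $CL^{-\alpha}$. Accumulating these factors over the at most $(\deg(\Gamma)-\deg(\Gamma^*))/2$ steps yields the bound $(CL^{-\alpha})^{(\deg(\Gamma)-\deg(\Gamma^*))/2}$.

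The key structural observation is an invariant of the reduction: in the initial graph $\Gamma$, and in every graph produced by the operations below, the only vertices of positive degree are cluster representatives of the form $x_s$ with $s \in \sS$ or $x_{u^*(\set)}$ with $\set \in \uU$. Consequently, any edge appearing at any stage of the reduction connects two distinct representatives, which by construction lie in different clusters of $\Clus$ and are therefore at distance strictly greater than $L\eps$ apart. Under this separation the factor arising from Proposition~\ref{pr:corr_change} is bounded by $\eps^{\alpha}/(L\eps+\eps)^{\alpha} \leq L^{-\alpha}$ up to a constant depending only on $\Lambda$.

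The two operations are: \emph{(i) triangulation}, which given a vertex $v$ and two of its edges $\{v,y\}, \{v,z\}$ with $y\neq z$ removes one copy of each and introduces one edge $\{y,z\}$; by \eqref{eq:corr_triangle} this multiplies $|\Gamma|$ by at most $R(v,y)R(v,z)/R(y,z) \leq CL^{-\alpha}$ while dropping the total degree by $2$; and \emph{(ii) edge removal}, which deletes a single edge $\{v,y\}$ and multiplies $|\Gamma|$ by $R(v,y) \leq \Lambda(L+1)^{-\alpha} \leq CL^{-\alpha}$ while also dropping the total degree by $2$. Both operations preserve the structural invariant above: triangulation only introduces edges between existing representatives $y, z$, and edge removal trivially does not create new positive-degree vertices.

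The reduction algorithm iteratively looks for a violation of the defining conditions of $\Omega^*$ and applies an appropriate operation: if some $x_{u^*(\set)}$ has edges to two distinct targets, triangulate at $x_{u^*(\set)}$, which progressively concentrates its edges onto a single vertex; if $x_{u^*(\set)}$'s edges are all incident on a vertex that is not a singleton of $\sS$, or if its degree exceeds $m+1$, apply edge removal there; finally, if some singleton $x_s$ has degree $\geq m+2$, reduce it by two via triangulation or edge removal. Since every operation strictly reduces the total degree, the procedure terminates after finitely many steps, and at termination no violation remains, so the resulting graph lies in $\Omega^*$. The main delicacy I anticipate in the write-up is that a triangulation at a singleton $x_s$ with two cluster-representative neighbours creates an edge between those representatives, temporarily violating condition~(2) of Definition~\ref{de:graph}; this is handled simply by continuing the reduction, with termination guaranteed by strict monotonicity of the total degree, and the accumulated multiplicative cost is at most $(CL^{-\alpha})^{(\deg(\Gamma)-\deg(\Gamma^*))/2}$ as required.
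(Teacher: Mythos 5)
Your proof is correct and follows essentially the same route as the paper: the same two local operations (triangulation via \eqref{eq:corr_triangle} and deletion of an edge joining points from distinct clusters), the same per-step cost $CL^{-\alpha}$ coming from the fact that any edge joins points at distance $> L\eps$, and the same termination argument via strict degree monotonicity. The only cosmetic differences are that you make the structural invariant (all positive-degree vertices are cluster representatives, hence all edges cross clusters) explicit, and you delete edges one at a time rather than two at a time as the paper does in its ``Case 2''; both presentations accumulate the same exponent $\tfrac{1}{2}(\deg\Gamma - \deg\Gamma^*)$.
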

\begin{proof}
	Fix $\k \in \N^{\sS}$, $\Bell \in \N^{\uU}$, and $\Gamma \in \Omega_{\k,\Bell}$ arbitrary. It suffices to show that if $\Gamma \notin \Omega^{*}$, then we can find a $\bar{\Gamma} \in \Omega_{\bar{\k}, \bar{\Bell}}$ with $\bar{\k} \leq \k$, $\bar{\Bell} \leq \Bell$ and $|\bar{\k}|+|\bar{\Bell}| < |\k|+|\Bell|$ strictly such that
	\begin{equation} \label{eq:reduction}
	|\Gamma| \leq \big(C L^{-\alpha}\big)^{\frac{1}{2}(|\k-\bar{\k}|+|\Bell-\bar{\Bell}|)} |\bar{\Gamma}|. 
	\end{equation}
	One can then iterate this bound until the graph is reduced to some $\Gamma^* \in \Omega^*$ to conclude the proposition. This necessarily happens since each time the total degree of the graph decreases strictly. Here, the inequality on $\k$ and $\Bell$ means the inequality in each component. 
	
	To see the existence of such a $\bar{\Gamma}$ when $\Gamma \in \Omega_{\k,\Bell} \setminus \Omega^*$, we consider the two situations where either one of the two conditions for $\Omega^*$ in Definition~\ref{de:graph} is violated. We first consider the violation of Condition 1. Since $\Gamma \in \Omega_{\k,\Bell}$, failure of Condition 1 means there exists $j \in \sS \cup \{u^*(\set): \set \in \uU\}$ such that $\deg(x_j) \geq m+2$. We fix this $j$, and there are two possibilities in this situation. 
	
	\begin{flushleft}
		\textit{Case 1.}
	\end{flushleft}
	There exist $i \neq i'$ such that $\EE(x_j, x_i) \geq 1$ and $\EE(x_j, x_{i'}) \geq 1$. In this case, we let $\bar{\Gamma}$ be the graph obtained from $\Gamma$ by performing the following operations: 
	\begin{equation*}
	\EE(x_j, x_{i}) \mapsto \EE(x_{j}, x_{i})-1,\; \EE(x_j, x_{i'}) \mapsto \EE(x_{j}, x_{i'})-1, \EE(x_{i},\; x_{i'}) \mapsto \EE(x_{i}, x_{i'})+1. 
	\end{equation*}
	The only point whose degree has been changed in this operation is $x_j$ (reduced by $2$). Hence, we have $\bar{\Gamma} \in \Omega_{\bar{\k}, \bar{\Bell}}$ with $(\bar{\k},\bar{\Bell}) \leq (\k,\Bell)$ and $|\k-\bar{\k}| + |\Bell-\bar{\Bell}|=2$. To see the bound \eqref{eq:reduction}, we note that by definition of $\Omega_{\k,\Bell}$, $x_j$ is at least $L \eps$ away from both $x_i$ and $x_{i'}$. Hence, by \eqref{eq:corr_triangle}, we have
	\begin{equation*}
	\EE(X_j X_i) \cdot \EE(X_j X_{i'}) \leq \frac{2^\alpha \Lambda^3}{(L+1)^{\alpha}} \cdot \EE(X_i X_{i'}). 
	\end{equation*}
	In graphic notation, this means
	\begin{equation*}\left|
		\begin{tikzpicture}[scale=1,baseline=-0.8cm]
		\node at (-0.8,0) [dot] (left){};
		\node at (0.8,0) [dot] (right) {};
		\node at (0,-1) [dot] (below) {}; 
		\node at (0,-1.3) {\scriptsize $j$}; 
		\node at (-1,0.2) {\scriptsize $i$}; 
		\node at (1,0.2) {\scriptsize $i'$}; 
		\draw (left) to node[labl]{\tiny $a+1$} (below); 
		\draw (right) to node[labl]{\tiny $b+1$} (below); 
		\draw (left) to node[labl]{\tiny $c$} (right); 
		\end{tikzpicture}\right|
		\leq \frac{2^\alpha \Lambda^3}{(L+1)^{\alpha}} \cdot
		\left|
		\begin{tikzpicture}[scale=1,baseline=-0.8cm]
		\node at (-0.8,0) [dot] (left){};
		\node at (0.8,0) [dot] (right) {};
		\node at (0,-1) [dot] (below) {}; 
		\node at (0,-1.3) {\scriptsize $j$}; 
		\node at (-1,0.2) {\scriptsize $i$}; 
		\node at (1,0.2) {\scriptsize $i'$}; 
		\draw (left) to node[labl]{\tiny $a$} (below); 
		\draw (right) to node[labl]{\tiny $b$} (below); 
		\draw (left) to node[labl]{\tiny $c+1$} (right); 
		\end{tikzpicture}\right|\;, 
	\end{equation*}
	where we have omitted $x$ and simply write the indices to denote vertices. Since all the other parts of the graph remain unchanged, this operation gives a desired $\bar{\Gamma}$ with \eqref{eq:reduction}. 
	
	\begin{flushleft}
		\textit{Case 2. }
	\end{flushleft}
    If for the $x_j$ that violates Condition 1, all its edges are connected to another point $x_i$, then we necessarily have $\deg(x_i) \geq m+2$. Thus, we let $\bar{\Gamma}$ be the graph obtained from $\Gamma$ by reducing $\EE(x_j, x_i)$ by two. Then, $\bar{\Gamma} \in \Omega_{\bar{\k},\bar{\Bell}}$ with $(\bar{\k}, \bar{\Bell}) \leq (\k, \Bell)$ but this time $|\k-\bar{\k}|+|\Bell-\bar{\Bell}|=4$. Since $|x_j - x_i| \geq L \eps$, we also have the bound
    \begin{equation*}
    |\Gamma| \leq \frac{\Lambda^2}{(L+1)^{2\alpha}} |\bar{\Gamma}|, 
    \end{equation*}
    which is also of the form \eqref{eq:reduction}. This completes the treatment of the violation of Condition 1. 
    
    \medskip
    
    We now turn to the situation when Condition 2 is violated. This means there exists $\set \in \uU$ such that
    \begin{enumerate} [label=(\alph*)]
    	\item either $x_{u^*(\set)}$ is connected to two other different points $x_i$ and $x_{i'}$; 
    	\item or $x_{u^*(\set)}$ is connected to $x_{u^*(\set')}$ for some $\set' \in \uU$. 
    \end{enumerate}
    For (a), we perform exactly the same operation as Case 1 in the above situation. This will give rise to a graph in $\Omega_{\bar{\k},\bar{\Bell}}$ with $\bar{\k}=\k$, $\bar{\ell}_\set = \ell_\set-2$ and $\bar{\ell}_{\set'}=\ell_{\set'}$ for all other $\set' \in \uU$, and satisfying \eqref{eq:reduction}. For (b), we simply reduce $\EE(x_{u^*(\set)} x_{u^*(\set')})$ by $1$, which results a graph in $\Omega_{\bar{\k}, \bar{\Bell}}$ with $|\k-\bar{\k}|+|\Bell-\bar{\Bell}|=2$ and the desired bound \eqref{eq:reduction}. 
    
    \bigskip
    
    Since the above cases have covered all the possibilities for $\Gamma \in \Omega_{\k,\Bell} \setminus \Omega^*$, we have completed the proof of the proposition. 
\end{proof}

The following proposition is then a simple consequence. 

\begin{prop} \label{pr:simplified}
	There exist $C>0$ and $L>0$ depending on $\Lambda$, $K$, $m$ and $r$ only such that for every $\theta \in \R$, every location $\x \in (\R^d)^K$ and every $\eps \in (0,1)$, we have the bound
	\begin{equation} \label{eq:simplified}
	\bigg| \E \prod_{j=1}^{K} \d_{\theta}^{r} \hH_{m}\big(e^{i \theta X_j}\big) \bigg| \leq C \max_{\Gamma \in \Omega^*} |\Gamma|
	\end{equation}
\end{prop}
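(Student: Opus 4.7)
The plan is to combine Proposition~\ref{pr:relax}, the Wick pairing bound~\eqref{eq:Wick_bound}, and Proposition~\ref{pr:reduction}, and then to choose the clustering distance $L$ large enough that the resulting series is absorbed by the Gaussian factor $e^{-\theta^2/(2\Lambda)}$ in~\eqref{eq:main_relax}. The case $\sS=\emptyset$ is already handled in Section~\ref{sec:clustering}: indeed~\eqref{eq:multi_1} gives an $O(1)$ bound, and $\Omega^*$ reduces to the single empty graph (condition~(2) of Definition~\ref{de:graph} forces $\ell_\set=0$ when $\sS=\emptyset$), so $\max_{\Gamma^* \in \Omega^*}|\Gamma^*|=1$ and~\eqref{eq:simplified} holds trivially. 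The substance of the argument is thus the case $|\sS|\geq 1$.

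First I would insert~\eqref{eq:Wick_bound} followed by Proposition~\ref{pr:reduction} into the right-hand side of~\eqref{eq:main_relax}. Every $\Gamma \in \Omega_{\k,\Bell}$ has total degree $\deg(\Gamma) = m|\sS|+|\k|+|\Bell| = N + m|\sS|$, while every $\Gamma^* \in \Omega^*$ satisfies $\deg(\Gamma^*) \leq (m+1)K$. Hence, once $L$ is large enough that $CL^{-\alpha}\leq 1$, Proposition~\ref{pr:reduction} contributes a factor bounded by $(CL^{-\alpha})^{(N+m|\sS|-(m+1)K)_+/2}$, the $(\cdot)_+$ truncation absorbing the finitely many small-$N$ terms into a harmless constant.

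Second I would apply the identity $(2k-1)!!/(2k)! = 1/(2^k k!)$, retain only even values of $M := N + m|\sS|$ (odd $M$ contribute zero), and bound the combined series by
\begin{equation*}
C\, \max_{\Gamma^* \in \Omega^*} |\Gamma^*| \cdot e^{-\theta^2/(2\Lambda)} \sum_{k \geq 0} \frac{\bigl(C_1 \scal{\theta}^2 L^{-\alpha}\bigr)^{k}}{k!} \;\leq\; C\, \max_{\Gamma^* \in \Omega^*} |\Gamma^*| \cdot \exp\!\Bigl(C_1 \scal{\theta}^2 L^{-\alpha} - \frac{\theta^2}{2\Lambda}\Bigr),
\end{equation*}
with constants $C, C_1$ depending only on $\Lambda, K, m, r$.

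The hard (and only essentially new) part is to absorb the $\scal{\theta}^2$-exponential into the Gaussian. Since $\scal{\theta}^2 \leq 2(1+\theta^2)$, it suffices to arrange $2C_1 L^{-\alpha} \leq (2\Lambda)^{-1}$, which uniquely determines a choice $L = L(\Lambda, K, m, r)$. With this $L$, the exponent in the display above is uniformly bounded in $\theta \in \R$, yielding~\eqref{eq:simplified}. The fact that the required $L$ is independent of $\theta$ (in contrast to~\cite{KPZ_general}, where $L$ had to be taken quadratic in $\theta$) is precisely the improvement announced in the introduction, and it is made possible by the presence of the Gaussian factor $e^{-\theta^2/(2\Lambda)}$ that Proposition~\ref{pr:relax} extracts from the singletons.
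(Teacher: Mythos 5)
Your proof is correct and follows essentially the same route as the paper: feed the Wick-pairing bound \eqref{eq:Wick_bound} and the reduction estimate of Proposition~\ref{pr:reduction} into \eqref{eq:main_relax}, observe that $(2k-1)!!/(2k)!=1/(2^k k!)$ turns the series into a Gaussian-type exponential $\exp(C_1\scal\theta^2 L^{-\alpha})$, and choose $L$ once and for all so that this is dominated by the factor $e^{-\theta^2/(2\Lambda)}$ coming from Lemma~\ref{le:coeff}. The only cosmetic difference is in how the exponent $\frac12(\deg\Gamma-\deg\Gamma^*)$ is split: the paper factors out $(CL^{\alpha/2})^{\deg\Gamma^*}$ and notes at the end that $\deg\Gamma^*\le(m+1)K$, whereas you truncate the exponent with $(\,\cdot\,)_+$ and absorb the bounded prefactor immediately; both rely on the same uniform degree bound for $\Omega^*$. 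You also spell out the (otherwise implicit) case $\sS=\emptyset$, where $\Omega^*$ consists of the empty graph with value $1$ and \eqref{eq:multi_1} gives the claim -- a harmless addition, since the paper only invokes the proposition when $|\sS|\ge1$.
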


\begin{rmk}
	The bound is completely independent of $\theta$ and $\eps$, and its dependence on the location of $\x$ is via $\Omega^*$ only. Also note that the clustering, and hence $\Omega^*$, depend on the choice of $L$. 
\end{rmk}

\begin{proof} [Proof of Proposition~\ref{pr:simplified}]
	Note that graphs in $\Omega_{\k,\Bell}$ have degree $|\k|+|\Bell| + m|\sS|$, so by Proposition~\ref{pr:reduction}, there exists $\Gamma^* \in \Omega^*$ such that
	\begin{equation*}
	\max_{\Gamma \in \Omega_{\k,\Bell}} |\Gamma| \leq (CL^{\frac{\alpha}{2}})^{\deg(\Gamma^*)} \cdot (CL^{-\alpha})^{\frac{1}{2}(|\k|+|\Bell|+m|\sS|)} \cdot |\Gamma^*|
	\end{equation*}
	for all $\k$ and $\Bell$. Combining it with Proposition~\ref{pr:relax} and \eqref{eq:Wick_bound}, we get
	\begin{equation} \label{eq:simplified_intermediate}
	\bigg| \E \prod_{j=1}^{K} \d_{\theta}^{r} \hH_{m}\big(e^{i \theta X_j}\big) \bigg| \leq C L^{\frac{\alpha}{2} \cdot \deg(\Gamma^*)} \cdot \exp \Big(-\frac{\theta^2}{2 \Lambda} + \frac{C_0 \scal{\theta}^{2}}{L^{\alpha}}\Big) \cdot \max_{\Gamma^* \in \Omega^*} |\Gamma^*|
	\end{equation}
	for some constant $C_0$. One can choose $L$ sufficiently large depending on $C_0$ and $\Lambda$ only such that
	\begin{equation} \label{eq:choice_L}
	\frac{1}{L^{\alpha}} < \frac{1}{4 C_0 \Lambda}. 
	\end{equation}
	This guarantees that the exponential term is uniformly bounded in $\theta$. Since $C_0$ depends on $\Lambda$, $K$, $m$ and $r$ only, so does $L$. Finally, $L^{\frac{\alpha}{2} \cdot \deg(\Gamma^*)}$ is also uniformly bounded since graphs in $\Omega^*$ have degrees at most $(m+1)K$. This completes the proof. 
\end{proof}

\begin{rmk} \label{rm:singleton_separate}
	The reason why we need to choose $L$ large is to ensure the exponential and hence the whole right hand side of \eqref{eq:simplified_intermediate} being uniformly bounded in $\theta$. As we see now, the Gaussian factor $e^{-\frac{\theta^2}{2 \Lambda}}$ in \eqref{eq:coeff_singleton} allows us to choose such $L$ being independent of $\theta$. Together with the enhancement procedure in Section~\ref{sec:enhancement} below, this ensures the bounds in Proposition~\ref{pr:simplified} and hence in Theorem~\ref{th:main} are completely independent of $\theta$. 
	
	Without the Gaussian factor, one would need to take $L$ quadratic in $\theta$ to make the exponential in \eqref{eq:simplified_intermediate} bounded, and the enhancement procedure in below would produce a bound that is polynomial in $\theta$ with its degree depending on $m$ and $K$. 
\end{rmk}

\subsection{Enhancement and conclusion of the proof}
\label{sec:enhancement}

From now on, we fix the choice of $L$ in \eqref{eq:choice_L}. We need to control the right hand side of \eqref{eq:simplified} by that of \eqref{eq:main}. To achieve this, we enhance every $\Gamma \in \Omega^*$ to a graph $\Enh(\Gamma)$ where $\deg(x_j) \in \{m,m+1\}$ for every $j \in [K]$, which matches the pairing occurring in the desired upper bound. The enhancement procedure will also be performed in such a way that $|\Enh(\Gamma)|$ is an upper bound for $|\Gamma|$ up to some proportionality constant, which is uniform in $\eps$, $\theta$ and $\x$ subject to $|\sS| \geq 1$. This will lead to the bound \eqref{eq:main}. The procedure is similar to the one used to obtain \eqref{eq:multi} when $|\sS|=0$. 

Fix $\Gamma \in \Omega^*$ arbitrary, so in particular, $\Gamma \in \Omega_{\k,\Bell}$ for some $\k \in \N^\sS$ and $\Bell \in \N^\uU$. By the definition of $\Omega^*$, $\deg(x_s) \in \{m,m+1\}$ for all $s \in \sS$. For every $\set \in \uU$, we have $\deg(x_{u^*})=\ell_\set \leq m+1$, and all of them are connected to one single $x_s$ for some $s \in \sS$ if $\ell_\set \geq 1$. All other points in $\set$ have degree $0$. To construct $\Enh(\Gamma)$, we add new edges to vertices in $\set \in \uU$ and also move around existing edges, but keep $\deg(x_s)$ unchanged for all $s \in \sS$ throughout the procedure. We do this cluster by cluster, and write $u^*=u^*(\set)$ for simplicity. 

Fix $\set \in \uU$ arbitrary. To perform the enhancement operation for $\set$, we let $s \in \sS$ be such that $x_s$ is the unique singleton point connected to $x_{u^*(\set)}$ if $\ell_\set \geq 1$. This also includes $\ell_\set=0$, in which case $s$ could be arbitrary. We distinguish several situations depending on the number of points in $\set$. 

\begin{flushleft}
	\textit{Case 1.} $|\set| = 2$. 
\end{flushleft}

Let $j \neq u^*(\set)$ denote the other point in $\set$. By definition of $\Omega^*$, we have $\deg(x_j)=0$. We then perform the following operations. We move $\lfl (\ell_\set+1)/2 \rfl$ of the $\ell_\set$ edges between $x_s$ and $x_{u^*}$ to connecting $x_s$ and $x_j$, and add $m-\ell_\set$ edges between $x_{u^*}$ and $x_{j}$. By clustering, we have $|x_{u^*}-x_{j}| \leq L \eps$ and $|x_{s}-x_j| \leq 2|x_s-x_{u^*}|$. Hence, Proposition~\ref{pr:corr_change} gives the bounds
\begin{equation*}
\begin{split}
(\E X_s X_{u*})^{\ell_\set} &\leq C (\E X_s X_{u^*})^{\lfl \frac{\ell_\set}{2} \rfl} (\E X_{s} X_j)^{\lfl \frac{\ell_\set+1}{2} \rfl}, \\
1 &\leq C \big( (L+1)^{\alpha} \; \E (X_{u^*}X_{j})\big)^{m-\lfl \frac{\ell_\set}{2} \rfl},
\end{split} 
\end{equation*}
where $L$ as chosen in \eqref{eq:choice_L} is independent of $\theta$, $\eps$ and $\x$. So in graphic notation, the above operation gives
\begin{equation*}
\begin{tikzpicture} [scale=0.9,baseline=1]
\node[cloud, cloud puffs=7.7, cloud ignores aspect, minimum width=2.5cm, minimum height=1.4cm,  draw=lightgray, fill=lightgray]  at (0,0) {};
\node at (0,2.3) [edot] (up) {}; 
\node at (-0.9,0) [edot] (left) {}; 
\node at (0.9,0) [edot] (right) {}; 
\node at (0.2, 2.4) {\tiny $s$}; 
\node at (-0.9,-0.3) {\tiny $u^*$};
\node at (0.9,-0.3) {\tiny $j$};
\draw (up) to node[labl]{\tiny $\ell_\set$} (left); 
\end{tikzpicture}
\quad \leq C \phantom{1}
\begin{tikzpicture} [scale=0.9,baseline=1]
\node[cloud, cloud puffs=7.7, cloud ignores aspect, minimum width=2.8cm, minimum height=1.5cm,  draw=lightgray, fill=lightgray]  at (0,0) {};
\node at (0,2.5) [edot] (up) {}; 
\node at (-1.4,0) [edot] (left) {}; 
\node at (1.4,0) [edot] (right) {};
\node at (0.2, 2.6) {\tiny $s$};  
\node at (-1.4,-0.3) {\tiny $u^*$};
\node at (1.4,-0.3) {\tiny $j$};
\draw (up) to node[labl]{\tiny $\lfl \frac{\ell_\set}{2} \rfl$} (left); 
\draw (up) to node[labl]{\tiny $\lfl \frac{\ell_\set+1}{2} \rfl$} (right); 
\draw (left) to node[labl]{\tiny $m-\lfl \frac{\ell_\set}{2} \rfl$} (right); 
\end{tikzpicture}\;, 
\end{equation*}
where the grey area indicates the cluster $\set$, and we have omitted drawing the remaining $(m-\ell_\set)$ or $(m+1-\ell_\set)$ edges from $x_s$. We also drop $|\cdot|$ and simply use the graph itself to denote its value. Then, $\deg(x_s)=m$ or $m+1$ is unchanged in the procedure. Furthermore, we have $\deg(x_{u^*})=m$ and $\deg(x_j)\in \{m,m+1\}$ after the operation. This also includes the situation $\ell_\set=0$.

\begin{flushleft}
	\textit{Case 2.} $|\set|=3$. 
\end{flushleft}

Let $i, j$ denote the two other points in $\set$. We then perform the operation
\begin{equation*}
\begin{tikzpicture} [scale=0.9,baseline=2]
\node[cloud, cloud puffs=7.7, cloud ignores aspect, minimum width=3cm, minimum height=2.5cm,  draw=lightgray, fill=lightgray]  at (0,0) {};
\node at (0,2.5) [edot] (up) {}; 
\node at (-0.9,0) [edot] (left) {}; 
\node at (0.4,0.9) [edot] (upright) {}; 
\node at (0.4,-0.9) [edot] (bottomright) {}; 
\node at (0.2, 2.6) {\tiny $s$}; 
\node at (-0.9,-0.3) {\tiny $u^*$};
\node at (0.6,1) {\tiny $i$}; 
\node at (0.6,-1) {\tiny $j$};
\draw (up) to node[labl]{\tiny $\ell_\set$} (left); 
\end{tikzpicture}
\quad \leq \phantom{1} C \phantom{1}
\begin{tikzpicture} [scale=0.9,baseline=2]
\node[cloud, cloud puffs=7.7, cloud ignores aspect, minimum width=4cm, minimum height=3.5cm,  draw=lightgray, fill=lightgray]  at (0,0) {};
\node at (-1.5,3) [edot] (up) {}; 
\node at (-1.8,0) [edot] (left) {}; 
\node at (0.8,1.3) [edot] (upright) {}; 
\node at (0.8,-1.3) [edot] (bottomright) {}; 
\node at (-1.3, 2.6) {\tiny $s$}; 
\node at (-1.8,-0.3) {\tiny $u^*$};
\node at (0.6,1.5) {\tiny $i$}; 
\node at (1,-1.1) {\tiny $j$};
\draw (up) to node[labl]{\tiny $\ell_\set$} (left); 
\draw (left) to node[labl]{\tiny $\lfl \frac{m+1-\ell_\set}{2} \rfl$} (upright); 
\draw (left) to node[labl]{\tiny  $\lfl \frac{m+1-\ell_\set}{2} \rfl$} (bottomright); 
\draw (upright) to node[labl]{\tiny $\lfl \frac{m+\ell_\set}{2} \rfl$} (bottomright); 
\end{tikzpicture}\;. 
\end{equation*}
We see $\deg(x_s)$ is unchanged. One can also check that $\deg(x_{u^*(\set)}) = m$ or $m+1$, and $\deg(x_i) = \deg(x_j) = m$. So we have the correct degrees of the vertices as well as the desired bound. 

\begin{flushleft}
	\textit{Case 3.} $|\set| \geq 4$. 
\end{flushleft}

We denote the other $|\set|-1$ points in the cluster by $j_{1}, \dots, j_{|\set|-1}$. For $|\set|-2$ of them, say $x_{j_1}, \dots, x_{j_{|\set|-2}}$, we perform the same operation as in Section~\ref{sec:clustering} by cyclically connecting them with edges of multiplicities $\lfl \frac{m+1}{2} \rfl$. This yields the bound
\begin{equation*}
1 \leq C \Big( \E(X_{j_1} X_{j_2}) \cdots \E(X_{j_{|\set|-3}} X_{j_{|\set|-2}}) \E(X_{j_{|\set|-2}} X_{j_1}) \Big)^{\lfl \frac{m+1}{2} \rfl}. 
\end{equation*}
For the remaining points $u^*$ and $j_{|\set|-1}$, we perform the same operation as in Case 1 above. This again raises the degrees of all points in $\set$ to $m$ or $m+1$ with a desired bound. 

\bigskip

Every cluster $\set \in \uU$ falls into one of the above three cases. The graph $\Enh(\Gamma)$ is obtained by performing the above operations to all $\set \in \uU$. It is clear from the bounds in the above three situations that there exists $C>0$ such that
\begin{equation*}
|\Gamma| \leq C |\Enh(\Gamma)|. 
\end{equation*}
It is also straightforward to check that in $\Enh(\Gamma)$, we have $\deg(x_j) \in \{m,m+1\}$ for all $j \in [K]$, and hence it represents one of the pairings from the expectation
\begin{equation*}
\E \prod_{j=1}^{K} \big( X_{j}^{\diamond m} + X_{j+1}^{\diamond (m+1)} \big). 
\end{equation*}
Hence, we deduce there exists $C>0$ such that
\begin{equation} \label{eq:enhance_bound}
|\Gamma| \leq C |\Enh(\Gamma)| \leq C \E \prod_{j=1}^{K} \big( X_{j}^{\diamond m} + X_{j+1}^{\diamond (m+1)} \big)
\end{equation}
for all $\theta$, $\eps$ and $\x$, and this is true for all $\Gamma \in \Omega^*$. Combining \eqref{eq:enhance_bound} and Proposition~\ref{pr:simplified}, we obtain the bound \eqref{eq:main} in the case $|\sS| \geq 1$. 

Since the bound when $\sS=\emptyset$ has already been established in \eqref{eq:multi}, we have thus completed the proof of Theorem~\ref{th:main}.

\section{Convergence of the fields -- proof of Theorem~\ref{th:convergence}}

We are now ready to prove Theorem~\ref{th:convergence}. For notational simplicity, we write $A \lesssim_{\mathbb{\alpha}} B$ to denote that $A \leq C B$, where the constant $C$ depends only on the parameter(s) in the subscripts of the symbol $\lesssim$ (and in this case $\alpha$). 

In order to apply the bound in Theorem~\ref{th:main}, we use the convention for $\hF$ such that
\begin{equation*}
F(x) = \int_{\R} \hF(\theta) e^{i \theta x} d \theta. 
\end{equation*}
But this only appears in intermediate steps, and the final statement does not depend on the definition of $\hF$. For every $\varphi: \R^d \rightarrow \R$, every $x \in \R^d$ and every $\lambda>0$, we let
\begin{equation*}
\varphi_{x}^{\lambda}(y) = \lambda^{-|\fs|} \varphi \Big( \frac{y_1-x_1}{\lambda^{s_1}}, \dots, \frac{y_d - x_d}{\lambda^{s_d}} \Big). 
\end{equation*}
Recall that $\Psi_\eps = \rho_\eps * \Psi$, where $\rho_\eps$ is the rescaled mollifier. Also recall the form of $a_m$ in \eqref{eq:a_m}. We first give the convergence criterion. 

\begin{prop} \label{pr:criterion}
	Let $\kappa>0$ and $m < \frac{|\fs|}{\alpha}$. If for every compact $\kK \subset \R^d$ and every $n \in \N$, we have
	\begin{equation} \label{eq:criterion}
	\sup_{\lambda \in (\eps, 1)} \sup_{x \in \kK} \sup_{\stackrel{\varphi \in \cC_{c}^{\infty}(\kK):}{\|\varphi\|_{\cC^{\frac{m \alpha}{2}}} \leq 1}} \lambda^{\frac{m \alpha}{2}+\kappa} \Big( \E |\scal{\eps^{-\frac{m \alpha}{2}} \hH_{m} \big(F(\eps^{\frac{\alpha}{2}} \Psi_{\eps}) \big) - a_{m} \Psi^{\diamond m}, \varphi_x^\lambda}|^{2n} \Big)^{\frac{1}{2n}} \rightarrow 0
	\end{equation}
	as $\eps \rightarrow 0$, then $\eps^{-\frac{m \alpha}{2}} \hH_{m}\big(F(\eps^{\frac{\alpha}{2}} \Psi_\eps)\big) \rightarrow a_m \Psi^{\diamond m}$ in $\cC^{-\frac{m\alpha}{2}-\kappa'}$ for every $\kappa' > \kappa$. 
\end{prop}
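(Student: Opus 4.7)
My plan is a Kolmogorov-type moment argument for the negative Hölder--Besov space $\cC^{-\frac{m\alpha}{2}-\kappa'}$: I reduce the norm to a countable supremum over dyadic test integrals, control the large-scale dyadic integrals via the hypothesis plus Borel--Cantelli, and handle the sub-mollification range $\lambda \le \eps$ (on which the hypothesis is silent) by a separate deterministic argument. Throughout, set $\Xi_\eps := \eps^{-\frac{m\alpha}{2}}\hH_m\bigl(F(\eps^{\frac{\alpha}{2}}\Psi_\eps)\bigr) - a_m\Psi^{\diamond m}$.

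The first step is the standard characterisation of $\cC^{\beta}$ for $\beta<0$: fix a finite collection $\sF_0$ of smooth, compactly supported test functions $\psi$ with $\|\psi\|_{\cC^{r}}\le 1$ for $r = \lceil -\beta\rceil$, and a $2^{-k}$-mesh $\fs$-lattice $\Lambda_k$ covering a neighbourhood of $\kK$; then one has $\|\Xi\|_{\cC^{\beta}(\kK)} \lesssim \sup_{k\ge 0}\sup_{x \in \Lambda_k}\sup_{\psi \in \sF_0} 2^{-k\beta}|\langle \Xi,\psi_x^{2^{-k}}\rangle|$. This reduces the a.s.\ statement to controlling a countable collection of random variables. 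In the large-scale regime $2^{-k}\ge \eps$ the hypothesis \eqref{eq:criterion} supplies, for every $n \in \NN$, some $\delta_n(\eps)\to 0$ with $\E|\langle\Xi_\eps,\psi_x^{2^{-k}}\rangle|^{2n}\le \delta_n(\eps)^{2n}\,2^{-kn(m\alpha+2\kappa)}$ uniformly in $k \le k_\eps:=\lceil\log_2(1/\eps)\rceil$, $x\in\Lambda_k$, $\psi\in\sF_0$. Weighting by $2^{k(\frac{m\alpha}{2}+\kappa')}$ leaves a residual gain $2^{-2kn(\kappa'-\kappa)}$; applying Markov and a union bound over the $O(2^{k|\fs|})$ lattice points, the finite $\sF_0$, and $k\le k_\eps$ is summable whenever $n$ is chosen with $2n(\kappa'-\kappa)>|\fs|$. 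Taking $\eps$ along a geometric sequence and invoking Borel--Cantelli yields a.s.\ vanishing of the large-scale supremum, and monotonicity of $\delta_n$ extends this to continuous $\eps\to 0$.

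The main obstacle is the sub-mollification range $2^{-k}<\eps$. I would split $\Xi_\eps$ into its two summands and handle them separately. For $a_m\Psi^{\diamond m}$ the condition $m<\frac{|\fs|}{\alpha}$ together with Assumption~\ref{as:field} and Wick's formula gives, by a direct and $\eps$-independent moment computation on $\E|\langle\Psi^{\diamond m},\psi_x^\lambda\rangle|^{2n}$, that $\Psi^{\diamond m}\in\cC^{-\frac{m\alpha}{2}-\frac{\kappa}{2}}(\kK)$ almost surely, so $2^{k(\frac{m\alpha}{2}+\kappa')}|\langle a_m\Psi^{\diamond m},\psi_x^{2^{-k}}\rangle|\lesssim 2^{-k(\kappa'-\frac{\kappa}{2})}$ is deterministically small. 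For the other summand $f_\eps := \eps^{-\frac{m\alpha}{2}}\hH_m(F(\eps^{\frac{\alpha}{2}}\Psi_\eps))$, the key point is that $\Psi_\eps=\Psi*\rho_\eps$ is smooth at scale $\eps$, so $f_\eps$ is a genuine smooth function whose derivatives up to order $r$ on $\kK$ have moments bounded by a fixed negative power of $\eps$. A Taylor comparison of $\psi_x^{2^{-k}}$ with $\psi_x^{\eps}$ exploiting $r$ vanishing moments of $\psi$ gives $|\langle f_\eps,\psi_x^{2^{-k}}\rangle| \le |\langle f_\eps,\psi_x^\eps\rangle| + C(2^{-k}/\eps)^r\|\nabla^r f_\eps\|_{L^\infty(\kK)}$; the first term is already covered by the $k=k_\eps$ case of the previous step, and the second, after multiplication by $2^{-k(\frac{m\alpha}{2}+\kappa')}$ and choosing $r$ strictly larger than $\frac{m\alpha}{2}+\kappa'$, is dominated uniformly by a quantity that vanishes with $\eps$.

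Combining the dyadic reduction, the Borel--Cantelli step at scales $\lambda\ge\eps$, and the deterministic Taylor estimate at scales $\lambda<\eps$ yields $\|\Xi_\eps\|_{\cC^{-\frac{m\alpha}{2}-\kappa'}(\kK)}\to 0$ almost surely for every compact $\kK$ and every $\kappa'>\kappa$, which is the claim. The delicate quantitative point to verify in the Taylor step is that the negative power of $\eps$ coming from the derivatives of $f_\eps$ is strictly dominated by the gain $(2^{-k}/\eps)^{r}$ together with the extra factor $\eps^{\kappa'-\kappa}$ inherited from $\delta_n(\eps)$, so matching exponents here is what forces the choice of $r$ and is the point where the strict inequality $\kappa'>\kappa$ is used.
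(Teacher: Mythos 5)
The overall plan — dyadic reduction of the $\cC^\beta$ norm, a Kolmogorov/Borel--Cantelli argument for the scales $\lambda \ge \eps$ covered by the hypothesis, and a supplementary argument for the sub-mollification scales $\lambda < \eps$ on which the hypothesis is silent — is the right shape, and it is more honest than the paper's one-line appeal to ``standard Kolmogorov's criterion'': the restriction $\lambda \in (\eps,1)$ in the hypothesis really does leave a gap that must be closed using the specific structure of $\Xi_\eps$, and you correctly identify this.

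However, there is a sign error in your dyadic weight that changes the character of the problem. The $\cC^\beta$ norm with $\beta<0$ is $\sup_{\lambda\in(0,1],x,\varphi}\lambda^{-\beta}\,|\langle u,\varphi_x^\lambda\rangle|$, and since $-\beta = \frac{m\alpha}{2}+\kappa' > 0$ and $\lambda = 2^{-k}$, the weight is $2^{-k(\frac{m\alpha}{2}+\kappa')}$, which is \emph{decreasing} in $k$. You instead write $2^{-k\beta} = 2^{k(\frac{m\alpha}{2}+\kappa')}$, an increasing weight; this makes the sub-mollification range $k>k_\eps$ look dangerous when in fact it is benign. With the correct weight, no Taylor comparison or vanishing-moment gain is needed at all: for $\lambda < \eps$ the crude bounds $|\langle f_\eps,\varphi_x^\lambda\rangle|\le C\|f_\eps\|_{L^\infty(\kK)}$ (since $f_\eps$ is a genuine continuous function) and $|\langle\Psi^{\diamond m},\varphi_x^\lambda\rangle|\le \lambda^{-\frac{m\alpha}{2}-\delta}\|\Psi^{\diamond m}\|_{\cC^{-\frac{m\alpha}{2}-\delta}}$ for any $\delta\in(0,\kappa')$, combined with the decaying weight $\lambda^{\frac{m\alpha}{2}+\kappa'}\le \eps^{\frac{m\alpha}{2}+\kappa'}$, already give a contribution of order $\eps^{\kappa'-\delta'}$ that vanishes as $\eps\to 0$ (one only needs the mild moment bound $\|\,\|f_\eps\|_{L^\infty(\kK)}\|_{2n}\lesssim \eps^{-\frac{m\alpha}{2}-\delta'}$ with $\delta'$ small). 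Your Taylor comparison, as written, is also not quite right: if $\psi$ has $r$ vanishing moments the gain is $\bigl(2^{-k}\bigr)^{r}\|\nabla^r f_\eps\|_{L^\infty}$, and the factor $(2^{-k}/\eps)^r$ only appears if one regroups the $\eps^{-r}$ from the derivative bound into the parenthesis, in which case the remaining factor is $\|f_\eps\|_{L^\infty}$ rather than $\|\nabla^r f_\eps\|_{L^\infty}$. Finally, your closing sentence about the gain being dominated ``together with the extra factor $\eps^{\kappa'-\kappa}$ inherited from $\delta_n(\eps)$'' conflates two different things — the rateless decay $\delta_n(\eps)\to 0$ from the hypothesis and the deterministic gain $\eps^{\kappa'-\kappa}$ from the change of exponent — and would need to be untangled. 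In short: correct strategy, and a valuable observation about the sub-mollification range, but the sign error and the imprecise Taylor step need repair; once the weight is corrected, the small-scale argument in fact simplifies considerably.
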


The proof of the proposition is standard Kolmogorov's criterion, and so it remains to prove \eqref{eq:criterion}. By stationarity, we can simply restrict to the case $x=0$ in \eqref{eq:criterion}. Writing $\|\cdot\|_{2n} := \big(\E |\cdot|^{2n} \big)^{\frac{1}{2n}}$ as well as $\Phi_\eps = \eps^{\frac{\alpha}{2}} \Psi_\eps$, we need to show for all small $\kappa$ that
\begin{equation} \label{eq:convergence_criterion}
\lambda^{\frac{m \alpha}{2}+\kappa} \|\scal{\eps^{-\frac{m \alpha}{2}} \hH_{m} \big(F(\Phi_\eps)\big)- a_{m} \Psi^{\diamond m}, \varphi^{\lambda}}\|_{2n} \rightarrow 0
\end{equation}
as $\eps \rightarrow 0$, uniformly over $\lambda \in (\eps, 1)$ and smooth $\varphi$ supported in a ball of radius $1$ such that $\|\varphi\|_{\cC^{\frac{m \alpha}{2}}} \leq 1$. The rest of the section is devoted to the proof of \eqref{eq:convergence_criterion}. 

Since $\Psi$ is stationary, so is $\Psi_{\eps} = \rho_{\eps} * \Psi$. Hence, $\Phi_{\eps}$ has stationary Gaussian distribution $\mu_{\eps} \sim \nN(0,\sigma_\eps^2)$ with
\begin{equation} \label{eq:sigma2_eps}
\sigma_{\eps}^{2} = \E \Phi_{\eps}^{2} = \eps^{\alpha} \int G(x-y) \rho_{\eps}(x) \rho_{\eps}(y) dx dy, 
\end{equation}
where $G$ is the correlation function of $\Psi$ as in Assumption~\ref{as:field}. The coefficient of the $m$-th term in the chaos expansion of $F(\Phi_\eps)$ is given by
\begin{equation*}
a_{m}^{(\eps)} = \frac{1}{m!} \big( F^{(m)}*\mu_{\eps}  \big)(0). 
\end{equation*}
We split the difference $\eps^{-\frac{m \alpha}{2}} \hH_{m}\big(F(\Phi_{\eps})\big) - a_m \Psi^{\diamond m}$ into three parts by
\begin{equation} \label{eq:separate}
\begin{split}
&\phantom{111}\eps^{-\frac{m \alpha}{2}} \hH_{m}\big(F(\Phi_{\eps})\big) - a_{m} \Psi^{\diamond m}\\
&= \Big( \eps^{-\frac{m \alpha}{2}} \hH_{m}\big(F(\Phi_{\eps})\big) - a_{m}^{(\eps)} \Psi_{\eps}^{\diamond m} \Big) + a_{m}^{(\eps)} \big( \Psi_{\eps}^{\diamond m} - \Psi^{\diamond m} \big) + \big( a_{m}^{(\eps)} - a_{m} \big) \Psi^{\diamond m}, 
\end{split}
\end{equation}
and we show that each of them satisfies a bound of the form of \eqref{eq:convergence_criterion}. 

The latter two terms are simpler. For the second one, we notice that $m < \frac{|\fs|}{\alpha}$ ensures $\Psi^{\diamond m}$ is well-defined, and for all sufficiently small $\kappa$, we have
\begin{equation*}
\|\scal{\Psi_\eps^{\diamond m} - \Psi^{\diamond m}, \varphi^{\lambda}}\|_{2n} \lesssim_{n} \eps^{\kappa} \lambda^{-\frac{m \alpha}{2}-\kappa}
\end{equation*}
uniformly over $\lambda \in (\eps,1)$. Also, since $a_{m}^{(\eps)}$ is uniformly bounded in $\eps$, it then follows immediately that
\begin{equation*}
\lambda^{\frac{m \alpha}{2}+\kappa} \|\scal{\Psi_{\eps}^{\diamond m} - \Psi^{\diamond m}, \varphi^{\lambda}}\|_{2n} \rightarrow 0
\end{equation*}
as $\eps \rightarrow 0$, uniformly over $\lambda \in (\eps, 1)$ and $\varphi$ in the range required in Proposition~\ref{pr:criterion}. 

For the third term, it suffices to notice that the assumption \eqref{eq:G_converge} on $G$ guarantees that $\sigma_{\eps}^{2} \rightarrow \sigma^2$, where $\sigma_\eps^2$ and $\sigma^2$ are given by \eqref{eq:sigma2_eps} and \eqref{eq:sigma2}. Hence, we immediately have $a_{m}^{(\eps)} \rightarrow a_m$. The desired bound of the form \eqref{eq:convergence_criterion} then follows immediately from the boundedness of $\Psi^{\diamond m}$ in $\cC^{-\frac{m \alpha}{2}-\kappa}$. 

We now turn to the first term on the right hand side of \eqref{eq:separate}, which requires the use of the bound in Theorem~\ref{th:main}. We first note that the covariance of $\Phi_\eps = \eps^{\frac{\alpha}{2}} \rho_{\eps} * \Psi$ has the form
\begin{equation*}
\E \big( \Phi_\eps(x) \Phi_\eps(y) \big) = (\rho_{\eps}^{\star 2} \star G)(x-y), 
\end{equation*}
where $\star$ denotes the forward convolution in the sense that $(f \star g)(x) = \int f(x+y) g(y) dy$. Assumption~\ref{as:field} on $G$ guarantees that $\Phi_{\eps}$ satisfies the assumption \eqref{eq:field_eps} in Theorem~\ref{th:main} with some $\Lambda>1$. Since $\Psi_{\eps}^{\diamond m} = \eps^{-\frac{m\alpha}{2}} \Phi_\eps^{\diamond m}$, and $a_{m}^{(\eps)}$ is precisely the $m$-th coefficient in the chaos expansion of $F(\Phi_{\eps})$, we have
\begin{equation} \label{eq:removal}
\eps^{-\frac{m \alpha}{2}} \hH_{m}\big(F(\Phi_{\eps})\big) - a_{m}^{(\eps)} \Psi_{\eps}^{\diamond m} = \eps^{-\frac{m \alpha}{2}} \hH_{m+1}\big(F(\Phi_{\eps})\big). 
\end{equation}
We leave aside the factor $\eps^{-\frac{m \alpha}{2}}$ and focus on $\hH_{m+1} \big( F(\Phi_\eps) \big)$ for a moment. Fourier expanding $F$ and changing the order of integration, we get the identity
\begin{equation*}
\scal{\hH_{m+1}\big( F(\Phi_\eps) \big), \varphi^{\lambda}} = \scal{\hF, \aA \Phi_\eps}_{\theta}, 
\end{equation*}
where
\begin{equation} \label{eq:A}
(\aA \Phi_\eps)(\theta) = (\aA_{\varphi, \lambda,m} \Phi_\eps)(\theta) = \int_{\R^d} \hH_{m+1} \big(e^{i \theta \Phi_{\eps}(x)} \big) \varphi^{\lambda}(x) dx, 
\end{equation}
and the subscript $\theta$ on the inner product indicates that the testing is taken with respect to the Fourier variable $\theta \in \R$. We now omit the subscripts in $\aA$ for simplicity. Recall that $\iI_k = (k-1,k+1)$. Multiplying $\aA \Phi_{\eps}$ by a partition of unity subordinate to the intervals $\{\iI_k\}_{k \in \Z}$ and separating these terms, we get the bound
\begin{equation*}
|\scal{\hH_{m+1}\big( F(\Phi_\eps) \big), \varphi^{\lambda}}| \leq C_M \sum_{k \in \Z} \|\hF\|_{M,\iI_k} \sup_{0 \leq r \leq M} \sup_{\theta \in \iI_k} |(\aA \Phi_\eps)^{(r)}(\theta)|, 
\end{equation*}
where $M \in \N$ is as in Assumption~\ref{as:F}. Now, taking $2n$-th moments on both sides and using triangle inequality, we get
\begin{equation} \label{eq:moment_1}
\|\scal{\hH_{m+1}\big( F(\Phi_\eps) \big), \varphi^{\lambda}}\|_{2n} \leq C_M \sum_{k \in \Z} \|\hF\|_{M,\iI_k}  \Big( \E \sup_{0 \leq r \leq M} \sup_{\theta \in \iI_k} |(\aA \Phi_\eps)^{(r)}(\theta)|^{2n} \Big)^{\frac{1}{2n}}. 
\end{equation}
There are two suprema inside the expectation. The first supremum is taken over $M+1$ elements, so we can move it out of $\big( \E |\cdot|^{2n} \big)^{\frac{1}{2n}}$ at the cost of a constant multiple depending on $M$ and $n$ only. The second one is taken over an interval, so we need the following lemma to interchange it with the expectation. 

\begin{lem} \label{le:exchange}
	Suppose $f$ is a random $\cC^1$ function on an interval $\iI$. For every $p \geq 1$, there exists $C$ depending on $p$ and $|\iI|$ only such that
	\begin{equation*}
	\E \sup_{\theta \in \iI} |f(\theta)|^{p} \leq C_{p,|\iI|} \sup_{\theta \in \iI} \E \Big( |f(\theta)|^{p} + |f'(\theta)|^{p} \Big). 
	\end{equation*}
\end{lem}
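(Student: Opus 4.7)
The plan is to combine the one-dimensional Sobolev embedding $W^{1,p}(\iI) \hookrightarrow L^\infty(\iI)$ with Fubini's theorem applied to non-negative integrands. The key pointwise fact is that on a bounded interval, the $L^\infty$ norm of a $\cC^1$ function is controlled by its $W^{1,p}$ norm, and this bound is deterministic so we can integrate against the probability measure afterwards.

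Concretely, I would first fix any two points $\theta, \theta_0 \in \iI$ and use the fundamental theorem of calculus to write $f(\theta) = f(\theta_0) + \int_{\theta_0}^{\theta} f'(s)\, ds$. Hölder's inequality yields the pointwise bound
\begin{equation*}
|f(\theta)|^p \leq 2^{p-1} |f(\theta_0)|^p + 2^{p-1} |\iI|^{p-1} \int_{\iI} |f'(s)|^p\, ds,
\end{equation*}
valid for all $\theta_0 \in \iI$. Taking the supremum over $\theta \in \iI$ on the left and then averaging over $\theta_0 \in \iI$ on the right gives
\begin{equation*}
\sup_{\theta \in \iI} |f(\theta)|^p \leq \frac{2^{p-1}}{|\iI|} \int_{\iI} |f(\theta_0)|^p\, d\theta_0 + 2^{p-1} |\iI|^{p-1} \int_{\iI} |f'(s)|^p\, ds.
\end{equation*}

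Taking expectation on both sides and applying Tonelli's theorem (all integrands are non-negative) to swap $\E$ and $\int_{\iI}$, I would bound each integral by the length of $\iI$ times the pointwise supremum of the expectation, obtaining
\begin{equation*}
\E \sup_{\theta \in \iI} |f(\theta)|^p \leq C_{p,|\iI|} \sup_{\theta \in \iI} \E\big( |f(\theta)|^p + |f'(\theta)|^p \big),
\end{equation*}
with $C_{p,|\iI|} = 2^{p-1}(1 + |\iI|^p)$. There is no real obstacle here beyond checking that the pathwise $\cC^1$ hypothesis justifies the fundamental theorem of calculus almost surely, and that the expectations on the right are finite in the cases of interest (otherwise the inequality is trivial).
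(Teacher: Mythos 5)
Your argument is correct and follows essentially the same route as the paper: fundamental theorem of calculus, H\"older's inequality to pull the $L^p$ norm of $f'$ out, then Fubini/Tonelli to exchange expectation and integration. The only cosmetic difference is that you average the anchor point $\theta_0$ over $\iI$, whereas the paper simply fixes one $\theta_0$ and bounds $\E|f(\theta_0)|^p$ by the supremum directly; both yield the stated estimate.
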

\begin{proof}
	Fix $\theta_0 \in \iI$ arbitrary. By fundamental theorem of calculus and H\"older's inequality, we have
	\begin{equation*}
	|f(\theta)| \leq |f(\theta_0)| + |\iI|^{1-\frac{1}{p}} \Big( \int_{\iI} |f'(x)|^{p} dx \Big)^{\frac{1}{p}}. 
	\end{equation*}
	Raising both sides to $p$-th power, we get
	\begin{equation*}
	\sup_{\theta} |f(\theta)|^{p} \leq C \Big( |f(\theta_0)|^{p} + \int_{\iI} |f'(\theta)|^{p} d\theta \Big), 
	\end{equation*}
	where $C$ depends on $p$ and $\iI$. The assertion then follows by taking expectation on both sides and noting that
	\begin{equation*}
	\E \int_{\iI} |f'(\theta)|^{p} d \theta = \int_{\iI} \E |f'(\theta)|^{p} d \theta \leq |\iI| \sup_{\theta \in \iI} \E |f'(\theta)|^{p}. 
	\end{equation*}
	This completes the proof of the lemma. 
\end{proof}

Using Lemma~\ref{le:exchange} to interchange the expectation and supremum, we have
\begin{equation*}
\E \sup_{0 \leq r \leq M} \sup_{\theta \in \iI_k} |(\aA \Phi_\eps)^{(r)}(\theta)|^{2n} \lesssim_{n,M} \sup_{0 \leq r \leq M+1} \sup_{\theta \in \iI_k} \E |(\aA \Phi_\eps)^{(r)}(\theta)|^{2n}, 
\end{equation*}
where the supremum over $r$ is taken over $r \leq M+1$ to include the one additional derivative required in the interchange. Plugging it back into the right hand side of \eqref{eq:moment_1}, we obtain
\begin{equation} \label{eq:moment_2}
\|\scal{\hH_{m+1}\big(F(\Phi_\eps)\big), \varphi^\lambda}\|_{2n} \lesssim_{n,M} \sum_{k \in \Z} \|\hF\|_{M,\iI_k} \sup_{0 \leq r \leq M+1} \sup_{\theta \in \iI_k} \Big( \E |(\aA \Phi_\eps)^{(r)}(\theta)|^{2n} \Big)^{\frac{1}{2n}}. 
\end{equation}
It then remains to control the quantity $\E |(\aA \Phi_\eps)^{(r)}(\theta)|^{2n}$. Recalling the expression of $\aA \Phi_\eps$ in \eqref{eq:A}, we have
\begin{equation*}
\E |(\aA \Phi_\eps)^{(r)}(\theta)|^{2n} = \int_{(\R^d)^{2n}}  \Big(  \E\prod_{j=1}^{2n} \d_{\theta}^{r} \hH_{m+1}\big( e^{i \theta \Phi_{\eps}(x_j)} \big) \Big) \cdot \Big( \prod_{j=1}^{2n} \varphi^{\lambda}(x_j) \Big) d\x, 
\end{equation*}
where we used the shorthand notation $\x=(x_1, \dots, x_{2n})$. We now apply Theorem~\ref{th:main} to the expectation part above, so that we get
\begin{equation*}
\E\prod_{j=1}^{2n} \d_{\theta}^{r} \hH_{m+1}\big( e^{i \theta \Phi_{\eps}(x_j)} \big) \lesssim_{r} \E \prod_{j=1}^{2n} \Big( \Phi_{\eps}^{\diamond (m+1)}(x_j) + \Phi_{\eps}^{\diamond (m+2)}(x_j) \Big). 
\end{equation*}
Plugging it into the integral on the right hand side above and using the identity
\begin{equation*}
\begin{split}
&\phantom{11}\int \bigg[ \E \prod_{j=1}^{2n} \Big( \Phi_{\eps}^{\diamond (m+1)}(x_j) + \Phi_{\eps}^{\diamond (m+2)}(x_j) \Big) \bigg] \cdot \Big( \prod_{j=1}^{2n} \varphi^{\lambda}(x_j) \Big) d\x\\
&= \E |\scal{\Phi_\eps^{\diamond (m+1)}+\Phi_\eps^{\diamond (m+2)}, \varphi^\lambda}|^{2n}, 
\end{split}
\end{equation*}
we get
\begin{equation} \label{eq:moment_3}
\big( \E |(\aA \Phi_\eps)^{(r)}(\theta)|^{2n} \big)^{\frac{1}{2n}} \lesssim_{r,n} \sum_{\ell=1}^{2} \big( \E |\scal{\Phi_\eps^{\diamond (m+\ell)}, \varphi^\lambda}|^{2n}\big)^{\frac{1}{2n}}. 
\end{equation}
In particular, the bound is uniform in both $\eps$ and $\theta$. We have the following lemma controlling the right hand side above.

\begin{lem} \label{le:higher_chaos}
	For every integer $\ell \geq 1$ and every sufficiently small $\kappa$, we have
	\begin{equation*}
	\big( \E |\scal{\Phi_\eps^{\diamond (m+\ell)}, \varphi^\lambda}|^{2n}\big)^{\frac{1}{2n}} \lesssim_{n,\ell} \eps^{\frac{m \alpha}{2}+\kappa} \lambda^{-\frac{m \alpha}{2}-\kappa}
	\end{equation*}
	uniformly over $\eps, \lambda \in (0,1)$. 
\end{lem}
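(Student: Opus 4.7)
\medskip

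\noindent\textbf{Proof plan.} The target random variable $\scal{\Phi_\eps^{\diamond(m+\ell)},\varphi^\lambda}$ lives in the $(m+\ell)$-th homogeneous Wiener chaos with respect to $\Phi_\eps$, so by Gaussian hypercontractivity all its $L^{2n}$-norms are equivalent to its $L^{2}$-norm up to a constant depending only on $n$ and $m+\ell$. It thus suffices to prove
\begin{equation*}
\E |\scal{\Phi_\eps^{\diamond(m+\ell)},\varphi^\lambda}|^{2} \lesssim_{\ell} \eps^{m\alpha+2\kappa}\lambda^{-m\alpha-2\kappa}
\end{equation*}
uniformly in $\eps,\lambda\in(0,1)$.

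By the Wick isometry, the left-hand side equals
\begin{equation*}
\int_{(\R^d)^2} \bigl(\E\Phi_\eps(x)\Phi_\eps(y)\bigr)^{m+\ell}\,\varphi^\lambda(x)\varphi^\lambda(y)\,dx\,dy,
\end{equation*}
and the upper bound in \eqref{eq:field_eps} controls the kernel by $\Lambda^{m+\ell}\eps^{(m+\ell)\alpha}(|x-y|+\eps)^{-(m+\ell)\alpha}$. I would then rescale by $x=\lambda^{\fs}u,\ y=\lambda^{\fs}v$, which uses $|\lambda^\fs u-\lambda^\fs v|=\lambda|u-v|$ and the fact that $\varphi$ is supported in a fixed bounded set, to reduce the problem to estimating
\begin{equation*}
I(\eps,\lambda):=\int \Bigl(\tfrac{\eps}{\lambda|u-v|+\eps}\Bigr)^{(m+\ell)\alpha}\varphi(u)\varphi(v)\,du\,dv.
\end{equation*}

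In the regime $\lambda\le\eps$ the integrand is trivially bounded by $1$, giving $I\lesssim 1$, which is $\le(\eps/\lambda)^{m\alpha+2\kappa}$ since the right-hand side is already $\ge 1$. For $\eps\le\lambda$, I would split along $|u-v|\lessgtr\eps/\lambda$: the near-diagonal piece contributes at most the volume $(\eps/\lambda)^{|\fs|}$, while the far piece becomes a shell integral $(\eps/\lambda)^{(m+\ell)\alpha}\int_{\eps/\lambda}^{\oO(1)}r^{|\fs|-1-(m+\ell)\alpha}dr$, whose size is dictated by the sign of $|\fs|-(m+\ell)\alpha$. Putting the cases together gives
\begin{equation*}
I(\eps,\lambda)\;\lesssim\;(\eps/\lambda)^{\min\{(m+\ell)\alpha,\,|\fs|\}}\bigl(1+\log_+(\lambda/\eps)\bigr),
\end{equation*}
the log appearing only in the boundary case $(m+\ell)\alpha=|\fs|$.

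The final step is to compare exponents. Since $\ell\ge 1$ we have $(m+\ell)\alpha\ge m\alpha+\alpha$, and the hypothesis $m<|\fs|/\alpha$ gives $|\fs|>m\alpha$; hence $\min\{(m+\ell)\alpha,|\fs|\}>m\alpha$ strictly, leaving a positive gap which absorbs both the logarithmic factor and any $2\kappa$ for all sufficiently small $\kappa>0$. The only genuine subtlety is thus the shell integral at the borderline $(m+\ell)\alpha=|\fs|$; away from it, the estimate is automatic, but one must be careful to state the bound so that it is uniform in $\eps,\lambda\in(0,1)$ across both regimes $\lambda\lessgtr\eps$ simultaneously — this is where the trivial bound $I\le\|\varphi\|_{L^1}^2$ plays its role.
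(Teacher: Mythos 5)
Your proposal is correct and follows the same overall strategy as the paper: Gaussian hypercontractivity to reduce $L^{2n}$ to $L^{2}$, Wick's formula to express the second moment as $\int \big(\E\Phi_\eps(x)\Phi_\eps(y)\big)^{m+\ell}\varphi^\lambda(x)\varphi^\lambda(y)\,dx\,dy$, and then the upper bound in \eqref{eq:field_eps} on the covariance kernel. The only place where you deviate is in how the resulting integral is estimated. You keep the full exponent $(m+\ell)\alpha$, rescale by $\lambda^\fs$, and split the domain into near-diagonal and far-field pieces, which forces you to distinguish the cases $(m+\ell)\alpha \lessgtr |\fs|$ and to absorb a possible logarithm at the borderline; the positivity of the gap $\min\{(m+\ell)\alpha,|\fs|\}-m\alpha$ together with the trivial bound $I\lesssim 1$ for $\lambda\le\eps$ then closes the argument. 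The paper instead downgrades the exponent before integrating: using $\ell\ge 1$, it observes that for any $\kappa\in(0,\ell\alpha)$,
\begin{equation*}
\frac{\eps^{(m+\ell)\alpha}}{(|x-y|+\eps)^{(m+\ell)\alpha}}\;\le\;\frac{\eps^{m\alpha+\kappa}}{|x-y|^{m\alpha+\kappa}},
\end{equation*}
since $(|x-y|+\eps)^{(m+\ell)\alpha}\ge \eps^{\ell\alpha-\kappa}(|x-y|+\eps)^{m\alpha+\kappa}$. Because $m\alpha+\kappa<|\fs|$ (using $m<|\fs|/\alpha$), the singularity $|x-y|^{-m\alpha-\kappa}$ is locally integrable, so a single scaling estimate gives $\int\varphi^\lambda(x)\varphi^\lambda(y)|x-y|^{-m\alpha-\kappa}\,dx\,dy\lesssim\lambda^{-m\alpha-\kappa}$ uniformly in $\eps,\lambda\in(0,1)$, with no case-splitting and no logarithm. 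Both routes are valid; the paper's pre-emptive loss of $\ell\alpha-\kappa$ powers of $\eps/(|x-y|+\eps)$ is the cleaner bookkeeping, whereas your regime analysis makes the source of the gap (the extra $\ell\alpha$ or the dimension $|\fs|$) more explicit.
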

\begin{proof}
	Since $\Phi_\eps$ is Gaussian, by equivalence of moments, the left hand side above can be controlled by
	\begin{equation*}
	\big( \E |\scal{\Phi_\eps^{\diamond (m+\ell)}, \varphi^\lambda}|^{2n}\big)^{\frac{1}{2n}} \lesssim_{n} \big( \E |\scal{\Phi_\eps^{\diamond (m+\ell)}, \varphi^\lambda}|^{2}\big)^{\frac{1}{2}}, 
	\end{equation*}
	so we only need to bound the second moment. By Wick's formula, we have
	\begin{equation*}
	\E |\scal{\Phi_\eps^{\diamond (m+\ell)}, \varphi^\lambda}|^{2} = (m+\ell)! \int \big( \E \Phi_{\eps}(x) \Phi_{\eps}(y) \big)^{m+\ell} \varphi^{\lambda}(x) \varphi^{\lambda}(y) dx dy. 
	\end{equation*}
	Since $\ell \geq 1$, we have
	\begin{equation*}
	\big( \E \Phi_\eps(x) \Phi_\eps(y) \big)^{m+\ell} \leq \frac{\Lambda^{m+\ell} \eps^{\alpha(m+\ell)}}{(|x-y|+\eps)^{\alpha(m+\ell)}} \leq \frac{\Lambda^{m+\ell} \eps^{m \alpha+\kappa}}{|x-y|^{m \alpha + \kappa}}
	\end{equation*}
	for all $\kappa \in (0,\ell \alpha)$. For all $\kappa$ sufficiently small such that $m \alpha + \kappa < |\fs|$, the singularity on the right hand side above is integrable, so we have
	\begin{equation*}
	\E |\scal{\Phi_\eps^{\diamond (m+\ell)}, \varphi^\lambda}|^{2} \lesssim \eps^{m \alpha + \kappa} \int \frac{\varphi^{\lambda}(x) \varphi^{\lambda}(y)}{|x-y|^{m\alpha+\kappa}} dx dy \lesssim \eps^{m \alpha+\kappa} \lambda^{-m\alpha-\kappa}. 
	\end{equation*}
	The proof is complete by taking square roots on both sides and replacing $\kappa$ by $2\kappa$. 
\end{proof}

Now, combining \eqref{eq:moment_2}, \eqref{eq:moment_3} and Lemma~\ref{le:higher_chaos}, and using Assumption~\ref{as:F} on $F$, we get
\begin{equation*}
\begin{split}
\|\scal{\hH_{m+1}\big(F(\Phi_\eps)\big), \varphi^\lambda}\|_{2n} &\lesssim_{n,M} \eps^{\frac{m\alpha}{2}+\kappa} \lambda^{-\frac{m\alpha}{2}-\kappa} \sum_{k \in \Z} \|\hF\|_{M,\iI_k}\\
&\lesssim_{n,M} \eps^{\frac{m\alpha}{2}+\kappa} \lambda^{-\frac{m\alpha}{2}-\kappa}. 
\end{split}
\end{equation*}
Substituting the above bound back to \eqref{eq:removal}, we deduce that
\begin{equation*}
\lambda^{\frac{m\alpha}{2}+\kappa}\|\scal{ \eps^{-\frac{m \alpha}{2}} \hH_{m}\big(F(\Phi_{\eps})\big) - a_{m}^{(\eps)} \Psi_{\eps}^{\diamond m}, \varphi^\lambda}\|_{2n} \lesssim_{n,M} \eps^{\kappa}, 
\end{equation*}
which is the desired bound. The proof of Theorem~\ref{th:convergence} is thus complete. 

\bibliographystyle{Martin}
\bibliography{Refs}

\end{document}